\newcommand{\CC}{\mathds{C}}
\newcommand{\PP}{\mathds{P}}
\newcommand{\RR}{\mathds{R}}
\newcommand{\A}{\mathcal{A}}
\newcommand{\Q}{\mathcal{Q}}
\newcommand{\T}{\mathcal{T}}
\newcommand{\C}{\mathcal{C}}
\newcommand{\N}{\mathcal{N}}
\newcommand{\p}{\pi_1}
\newcommand{\inv}{^{-1}}
\renewcommand{\P}{\mathcal{P}}
\renewcommand{\epsilon}{\varepsilon}
\newcommand{\aff}{^{\text{aff}}}
\newcommand{\set}[1]{\left\{ #1 \right\}}
\newromanexpr\sgn{sgn}
\newromanexpr\HH{H}
\newcommand{\ncross}[2]{\draw[thick,cap=round](#2,#1)--(1+#2,#1);}
\newcommand{\ocross}[2]{\draw[thick,cap=round](#2,#1)--(1+#2,1+#1); \draw[thick,color=white,line width=6pt] (0.2+#2,1+#1)--(0.8+#2,#1); \draw[thick,cap=round](#2,1+#1)--(1+#2,#1);}
\newcommand{\ucross}[2]{\draw[thick,cap=round](#2,1+#1)--(1+#2,#1); \draw[thick,color=white,line width=6pt] (0.2+#2,#1)--(0.8+#2,1+#1); \draw[thick,cap=round](#2,#1)--(1+#2,1+#1);}
\newcommand{\rcross}[2]{\draw[thick,cap=round](#2,1+#1)--(1+#2,#1); \draw[thick,cap=round](#2,#1)--(1+#2,1+#1);}
\newcommand{\mcross}[3]{\foreach \k in {1,2,...,#2} {\draw[thick,cap=round](#3,-1+\k+#1)--(1+#3,-\k+#1+#2);} }
\newtheorem{thm}{Theorem}[section]
\newtheorem{propr}[thm]{Property}
\newtheorem{propo}[thm]{Proposition}
\newtheorem{cor}[thm]{Corollary}
\newtheorem{de}[thm]{Definition}
\newtheorem{ntc}[thm]{Notation}
\newtheorem{ex}[thm]{Example}
\newtheorem{rmk}[thm]{Remark}
\begin{document}

\title{On complex line arrangements and their boundary manifolds}

\author[V. Florens, B. Guerville-Ball{\'e} and M.A. Marco Buzunariz]
{V. Florens\\
	LMA, UMR CNRS 5142, Universit{\'e} de Pau et des Pays de l'Adour \textup{64000} Pau, France\addressbreak
  e-maill\textup{: \texttt{vincent.florens@univ-pau.fr}}
\nextauthor B. Guerville-Ball{\'e}\\
	IJF, UMR 5582 CNRS-UJF, Universit{\'e} Grenoble Alpes \textup{38 000} Grenoble, France\addressbreak
  e-maill\textup{: \texttt{benoit.guerville-balle@math.cnrs.fr}}
\and\ M.A. Marco Buzunariz\\
	ICMAT: CSIC-Complutense-Autonoma-Carlos III, Departamento de Algebra \\ 
	Facultad de CC. Matematicas - Plaza de las Ciencias, \textup{3, 28040} Madrid, Spain\addressbreak
  e-maill\textup{: \texttt{mmarco@unizar.es}}}

\receivedline{Received \textup{6} February \textup{2014;}
              revised \textup{26} February \textup{2015}}

\maketitle

\begin{abstract}
	Let $\mathcal{A}$ be a line arrangement in the complex projective plane $\CC\PP^2$.
	We define and describe the inclusion map of the boundary manifold --the boundary of a close regular neighborhood of $\A$--  in the exterior of the arrangement. We obtain two explicit descriptions of the map induced on the fundamental groups. These computations provide a new minimal presentation of the fundamental group of the complement.
\end{abstract}


\section{Introduction}

Line arrangements are finite collections of complex lines in the projective space $\CC\PP^2$, that is,  plane algebraic curves whose irreducible components are all of degree one. The general study of discriminants of curves in $\CC\PP^2$ and their stratification leads to considering the homeomorphism type of the pair as a natural isotopy invariant. We refer to this invariant as the \emph{topology of the embedding}. O.~Zariski was the first to show that the \emph{combinatorial description} of a curve (degree of the components, local type of the singularities,...) is not enough to determine the topology. The case of line arrangements is quite motivating since lines are non-singular and two lines intersect at a single point: the combinatorial structure of an arrangement can  easily be encoded in the \emph{incidence graph}. However, S.~MacLane \cite{mcl:36}  showed that it does not determine the deformation class. Later G.~Rybnikov \cite{ry:98} showed that combinatorics does not determine even the topological type of the complement (see also \cite{accm,accm:03a}). This motivates the study of topological invariants such as  the fundamental group (and related: characters, Alexander invariants, characteristic varieties,...). 

 On the other hand, one may consider the \emph{boundary manifold} $B_\A$ of an arrangement $\A$, defined as the boundary of a closed regular neighborhood in $\CC\PP^2$.  It is a compact graph 3-manifold in the sense of F.~Waldhausen \cite{wal:67b}, whose topology is combinatorially determined~\cite{westlund,cosu:08}. In particular the graph structure is modeled by the incidence graph $\Gamma_\A$. Its fundamental group can be computed from this description, see for example~\cite{cosu:08}.

Our general aim is to study the inclusion map of the boundary manifold $B_\A$ in the exterior $E_\A$ of the arrangement in $\CC\PP^2$ and to give an explicit method to compute the map at the level of their fundamental groups. This is related to the work of E.~Hironaka \cite{eh:01} on complexified real arrangements, but the complex case requires a more careful study of generators of $\pi_1(B_\A) $, coming from cycles of the graph $\Gamma_\A$. From these computations, we derive a new minimal presentation of $\p(E_\A)$. 

Our main motivation is a series of papers (of joint works with E.~Artal~\cite{AFG,gue,gue:phd}), where applications of the map and its description are given.  We observe that the inclusion map captures some relevant information on the position of singularities that is not contained in the combinatorics. Indeed, in~\cite{AFG}, we use it to construct a new topological invariant of arrangements, see~\cite{gue,gue:phd} for illustrations and examples. Let us also mention that our method allows to complete the work of E.~Artal \cite{ea:toulouse} on the essential coordinate components of the characteristic varieties of an arrangement. It provides a crucial geometrical ingredient to compute the depth of any characters of the fundamental group (see in particular \cite{ea:toulouse} Section 5.4). This gives the only known way ---a geometrical way--- to compute this algebraic invariant of arrangements.
 
In Section 2, we recall the basics on combinatorics of arrangements. We construct the boundary manifold $B_\A$ from the incidence graph  $\Gamma_\A$ and give a presentation of its fundamental group. Section 3 is devoted to the complement $E_\A$ and the calculation of its fundamental group from the braided wiring diagram. In Section 4, we present the method to compute the inclusion map on fundamental groups. We obtain a description of the homotopy type of the exterior where the boundary manifold appears explicitly. In Section 5, we illustrate the method using MacLane's arrangement.

Along the different sections, the notions and computations are illustrated with the \emph{didactic} example described by the following equations:
\begin{equation*}
	\begin{array}{c}
		L_0=\left\{z=0\right\},\ L_1=\left\{ -(i + 2)x + (2i + 3)y = 0 \right\},\ L_2=\left\{ -x + (i + 2)y = 0 \right\}, \\
		L_3=\left\{ -x + 3y + iz = 0 \right\},\ L_4=\left\{ -x + (2i + 2)y = 0 \right\}.
	\end{array}
\end{equation*}


\section{The boundary manifold}\label{BoundaryManifold}

	We sometimes use both projective and affine points of view on arrangements. For a given arrangement $\A$ in $\CC\PP^2$ with $n+1$ lines, the line $L_0$ will denote an arbitrary choice of the line at infinity. The arrangement $\A-L_0$  in $\PP^2 -L_0 \simeq \CC^2$ is an affine arrangement with $n$ lines.
	 
The boundary manifold $B_\A$ is the boundary of a closed regular neighborhood of $\A$, which can be constructed as a sub-complex of a triangulation of $\PP^2$ --the closed star of $\A$ in the second barycentric subdivision. This is a compact, connected, oriented graph $3$-manifold, modeled on the incidence graph. In particular, it is combinatorially determined: any isomorphism of the incidence graph induces an isomorphism of the graph manifold,~\cite{eh:01}.


\subsection{Incidence graph}
\vspace{0.25cm}

Let $\A$ be an arrangement with set of singular points $\mathcal{Q}$. The incidence graph encodes the combinatorial information on $\A$, see \cite{orte:92} for details.
For $P\in\mathcal{Q}$, let us denote $\A_P=\{\ell\in\A\mid P\in\ell \}$. The number $m_P=\# A_P\geq 2$ is called the \emph{multiplicity} of~$P$.

\begin{de}\label{Incidence}
	 The \emph{incidence graph} $\Gamma_{\A}$ of $\A$ is a non-oriented bipartite graph where the set of vertices $V(\A)$ decomposes as $V_P(\A)\amalg V_L(\A)$, where
	\begin{equation*}
		V_P(\A)=\{v_P\mid P\in \mathcal{Q}\},\quad
		V_L(\A)=\{v_L\mid L\in \A\}.
	\end{equation*}
	The vertices of $V_P(\A)$ are called \emph{point-vertices} and those of $V_L(\A)$ are called \emph{line-vertices}.
	The edges of $\Gamma_{\A}$ join $v_L$ to $v_P$ if and only if $L\in \A_P$. They are denoted $e(L,P)$.
\end{de}

A morphism between incidence graphs is a morphism of graphs preserving the vertex labelings, which send elements of $V_P(\A)$ (resp. $V_L(\A)$) to elements
 of $V_P(\A)$  (resp. $V_L(\A)$).

The incidence graph of the didactic example is pictured in Figure \ref{DidacticIncidence}. 

\begin{figure}[!ht]	
	\begin{center}
	\begin{tikzpicture}[xscale=1.3,yscale=0.6]
	
		\node[draw] (L0) at (0,3) {$L_0$};
		
		\node[draw] (P01) at (1,6) {$P_{0,1}$};
		\node[draw] (P02) at (1,4) {$P_{0,2}$};
		\node[draw] (P04) at (1,2) {$P_{0,4}$};
		\node[draw] (P03) at (1,0) {$P_{0,3}$};
		
		\node[draw] (L3) at (2.5,0) {$L_3$};
		\node[draw] (L4) at (2.5,2) {$L_4$};
		\node[draw] (L2) at (2.5,4) {$L_2$};
		\node[draw] (L1) at (2.5,6) {$L_1$};
				
		\node[draw] (P124) at (6,6) {$P_{1,2,4}$};
		\node[draw] (P13) at (6,4) {$P_{1,3}$};
		\node[draw] (P23) at (6,2) {$P_{2,3}$};		
		\node[draw] (P34) at (6,0) {$P_{3,4}$};
		
		\draw[thick, line width=1.5pt, cap=round] (L0) -- (P01);
		\draw[thick, line width=1.5pt, cap=round] (L0) -- (P02);
		\draw[thick, line width=1.5pt, cap=round] (L0) -- (P03);
		\draw[thick, line width=1.5pt, cap=round] (L0) -- (P04);
		
		\draw[thick, line width=1.5pt, cap=round] (P01) -- (L1);
		\draw[thick, line width=1.5pt, cap=round] (P02) -- (L2);
		\draw[thick, line width=1.5pt, cap=round] (P03) -- (L3);
		\draw[thick, line width=1.5pt, cap=round] (P04) -- (L4);
		
		\draw[thick, line width=1.5pt, cap=round] (L1) -- (P124);
		\draw[thick, line width=1.5pt, cap=round] (L1) -- (P13);
		\draw[thick, line width=1.5pt, cap=round] (L2) -- (P23);
		\draw[thick, line width=1.5pt, cap=round] (L4) -- (P34);
		
		\draw (L2) -- (P124);
		\draw (L4) -- (P124);
		\draw (L3) -- (P13);
		\draw (L3) -- (P23);
		\draw (L3) -- (P34);		
	\end{tikzpicture}
\end{center}
	\caption{Incidence graph of the didactic example}
	\label{DidacticIncidence}
\end{figure}
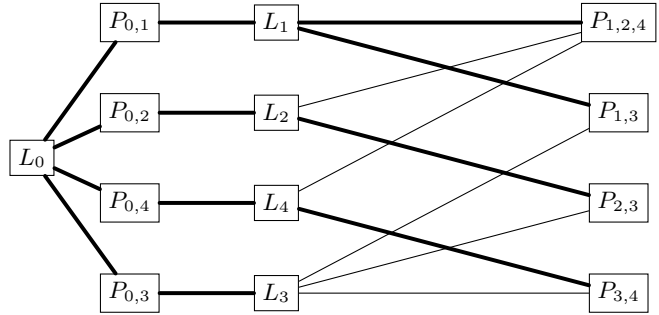


\subsection{Construction of $B_\A$}\label{BoundaryManifoldConstruction}
\vspace{0.25cm}

Let $U$ be a compact regular neighborhood of $\A$. We recall that the boundary manifold $B_\A$ can be defined as the boundary of $U$. This manifold $B_\A$ is combinatorially determined and can be computed from the incidence graph $\Gamma_\A$ as follows:

For every singular point $P\in\mathcal{Q}$ of $\mathcal{A}$, consider a $4$-ball $\mathbb{B}_P$ of radius $\eta$, centered in $P$. Let $\mathcal{S}_P = \partial   (\mathbb{B}_P) \backslash \mathbf{T}$, where $\mathbf{T}$ is an open regular neighborhood of the link $L_P=\left( \partial\mathbb{B}_P \cap \mathcal{A} \right)$. The boundary of $\mathcal{S}_P$ is a union of disjoint tori $T$ indexed by the lines $L_i$ passing through $P$, and $T_L=\left(L\cap\partial \mathbb{B}_P\right) \times S^1$.

\begin{de}\label{Def_MerLongTl}
	Let $P \in \mathcal{Q}$ and $L \in \A$ be such that $P \in L$. The \emph{meridian} $m_L$ and the \emph{longitude} $l_L$ of the torus $T_L$ are the pair of oriented simple closed curves in $T_L\subset\partial \overline{\mathbf{T}}$ which are determined up to isotopy by the homology and linking relations:
	\begin{equation*}
		\begin{array}{c}
			m_L\thicksim 0,\quad l_L\thicksim (L\cap\overline{\mathbf{T}})\quad \text{ in }\HH_1(\overline{\mathbf{T}}) ;\\[6pt]
			\ell(m_L, L\cap\overline{\mathbf{T}})=1,\quad \ell(l_L,L\cap\overline{\mathbf{T}})=0,
		\end{array}		
	\end{equation*}
	where $\ell(\cdot,\cdot)$ denotes the linking number in $\partial\mathbb{B}_P\simeq S^3$.
\end{de}

Consider the surface:
	\begin{equation*}
		F = \mathcal{A} \setminus  \coprod\limits_{P\in\mathcal{Q}} \left(\mathcal{A} \cap \overset{\circ}{\mathbb{B}}_P\right).
	\end{equation*}	
	It is obtained by removing of $\A$ open discs from the  $\mathbb{B}_P$'s. One sees that $F$ is a union $\coprod\limits_{i=0}\limits^n F_i$ where each $F_i$ corresponds to the line $L_i$ of $\mathcal{A}$. Let $\mathcal{N}_i = F_i \times S^1$ whose boundary is a union of disjoint tori $T$ indexed by the points $P \in \P \cap L_i$. 
	
	Let $D$ be a generic line (i.e. for all $P\in\Q$, $P\notin D$), and consider $D$ as the line at infinity. We decompose $\mathcal{N}_i$ in the solid torus $\mathcal{N}_i \cap T(D)=\mathbf{T}_i^\infty$, where $T(D)$ is regular neighborhood of $D$, and the affine part $\mathcal{N}_i\aff$ defined as the closure of $\mathcal{N}_i\setminus \mathbf{T}_i^\infty$. Viewed as the affine part, $\mathcal{N}_i\aff$ admits a natural trivialization in the affine space $\CC\PP^2\setminus D$, and we choose a section $s$ of $\mathcal{N}_i\aff$. 
	
\begin{de}\label{Def_MerLongTp}
	Let $L_i \in \A$ and $P \in \P$ be such that $P\in L_i$. The \emph{longitude} $l_P$ of the torus $T_P$ is the intersection of the section $s$ with $T_P$. The \emph{meridian} $m_P$ of $T_P$ is the class in $\HH_1(T_P)$ of $\set{*}\times S^1$.
\end{de}	
	
	\begin{rmk}
		To reconstruct $\mathcal{N}_i$ from $\mathcal{N}_i\aff$ and $\mathbf{T}_i^\infty$, we glue the boundary component of $\mathcal{N}_i\aff$ different of the $T_P$'s with $\partial \mathbf{T}_i^\infty$. The gluing is done identifying the intersection of the section in this component with the sum of a longitude of $\partial \mathbf{T}_i^\infty$ (i.e. a curve in  $\partial \mathbf{T}_i^\infty$ homologically equivalent to $L_i \cap \mathbf{T}_i^\infty$ in $\mathbf{T}_i^\infty$) and a meridian; and the meridian with a fiber of the $S^1$-fibration of $T_i^\infty$.
	\end{rmk}
	
	For each edge $e(L_i,P)$ of $\Gamma_\A$, glue $\mathcal{S}_P$ with $\mathcal{N}_i$ along $T_{L_i}$ and $T_P$ identifying meridian with meridian, and longitude with longitude. The manifold then obtained is the boundary manifold of $\A$. From this construction of $B_\A$, we deduce its structure as a graph manifold.
	
\begin{propo}[\cite{westlund,eh:01}]
	Let $\A$ be a complex line arrangement. The boundary manifold $B_\A$ is a graph manifold over the incidence graph $\Gamma_\A$.
\end{propo}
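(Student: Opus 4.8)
The plan is to read the graph-manifold structure of $B_\A$ directly off the construction of Subsection~\ref{BoundaryManifoldConstruction}. Recall that a graph manifold in the sense of Waldhausen is a compact $3$-manifold cut by a disjoint family of embedded tori into Seifert fibered pieces, the associated graph having one vertex for each piece and one edge for each cutting torus. I would take the cutting tori to be the family $T_{L_i}=T_P$ used in the gluing, one for each edge $e(L_i,P)$ of $\Gamma_\A$; the pieces are then the manifolds $\mathcal{S}_P$, placed at the point-vertices $v_P$, and the $\mathcal{N}_i$, placed at the line-vertices $v_{L_i}$. With this choice the dual graph is bipartite with vertex set indexed by $\Q\amalg\A$ and an edge joining $v_{L_i}$ to $v_P$ exactly when $L_i\in\A_P$, i.e.\ it is $\Gamma_\A$ as in Definition~\ref{Incidence}. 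It then remains only to check that each $\mathcal{N}_i$ and each $\mathcal{S}_P$ is Seifert fibered with the stated boundary tori.

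For a line-vertex this is immediate. The line $L_i$ is a projective line, hence a $2$-sphere, and $F_i$ is obtained from $L_i$ by removing one open disc around each point $P\in\Q\cap L_i$; so $F_i$ is a planar surface whose boundary circles are in bijection with $\Q\cap L_i$, and $\mathcal{N}_i=F_i\times S^1$ is a trivially Seifert fibered piece over $F_i$, with boundary tori $T_P$ indexed by the points of $L_i$, that is, by the edges of $\Gamma_\A$ incident to $v_{L_i}$. The decomposition of $\mathcal{N}_i$ along $\mathbf{T}_i^\infty$ and the choice of section $s$ in the construction only normalize the longitudes of Definition~\ref{Def_MerLongTp} and do not affect the Seifert structure.

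The geometric input is at the point-vertices. In the ball $\mathbb{B}_P$ the pair $(\mathbb{B}_P,\A\cap\mathbb{B}_P)$ is, after a linear change of coordinates, $m_P$ complex lines through the origin of $\CC^2$, so the link $L_P=\A\cap\partial\mathbb{B}_P$ is the $m_P$-component Hopf link in $\partial\mathbb{B}_P\cong S^3$, each component a fiber of the Hopf fibration $S^3\to S^2$. Removing the open regular neighborhood $\mathbf{T}$ of $L_P$ therefore exhibits $\mathcal{S}_P$ as the restriction of the Hopf $S^1$-fibration over the complement of $m_P$ open discs in $S^2$: a Seifert fibration without exceptional fibers over a planar base, whose boundary tori $T_L$ ($L\in\A_P$) are indexed by the lines through $P$, i.e.\ by the edges of $\Gamma_\A$ incident to $v_P$. (Equivalently, a circle bundle over a surface with boundary is trivial, so $\mathcal{S}_P$ is itself a product $(\text{planar surface})\times S^1$.)

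Putting this together, the gluing prescribed in Subsection~\ref{BoundaryManifoldConstruction} attaches $T_{L_i}\subset\partial\mathcal{S}_P$ to $T_P\subset\partial\mathcal{N}_i$ exactly along the edges $e(L_i,P)$, so $B_\A$ is assembled from Seifert fibered pieces along tori with dual graph $\Gamma_\A$, which is the assertion. The step I expect to need the most care is confirming that this is a \emph{genuine} graph decomposition, i.e.\ that no two adjacent pieces are fused by the gluing into a single Seifert manifold; this is where Definitions~\ref{Def_MerLongTl} and~\ref{Def_MerLongTp} enter. The gluing identifies meridian with meridian and longitude with longitude, but on the common torus $T_L=T_P$ the Seifert fiber of $\mathcal{S}_P$ is $l_L$ (homologous to the Hopf fiber $L\cap\overline{\mathbf{T}}$), which is sent to the section curve $l_P$ — transverse to the product fiber $m_P$ of $\mathcal{N}_i$ — while the product fiber $m_P$ corresponds to $m_L$, which bounds in $\mathcal{S}_P$. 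Thus the two Seifert fibrations restrict to transverse fibrations on the gluing torus and do not extend across it; together with the irreducibility of the pieces (planar bases), this shows that the family $\{T_{L_i}=T_P\}$ realizes the graph-manifold structure modeled on $\Gamma_\A$.
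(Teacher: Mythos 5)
Your proof is correct and follows essentially the same route as the paper, which simply reads the graph-manifold structure off the construction of Sub-section~\ref{BoundaryManifoldConstruction} (the pieces $\mathcal{S}_P$ and $\mathcal{N}_i=F_i\times S^1$ glued along the tori $T_{L_i}=T_P$ indexed by the edges of $\Gamma_\A$), citing \cite{westlund,eh:01} rather than spelling out the Seifert structures. One small correction to your last paragraph: since distinct Hopf fibres have linking number one, the Seifert fibre of $\mathcal{S}_P$ on $T_L$ is the $(1,1)$-curve $m_L+l_L$ rather than $l_L$; under the gluing it is sent to $m_P+l_P$, which is still of a different slope than the fibre $m_P$ of $\mathcal{N}_i$, so your conclusion that the two fibrations do not match across the gluing tori is unaffected.
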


\begin{rmk}
	Our construction is different from the one in~\cite{westlund} and~\cite{cosu:08} which uses the blow-up $\hat{\A}$ of $\A$ and plumbing graph as defined by W.D.~Neumann in~\cite{neu:81}. Since the incidence graph of $\A$ and the dual graph of $\hat{\A}$ are equivalent, a result of F.~Waldhausen~\cite{wal:67b} shows that the corresponding manifolds are homeomorphic. With elementary computations, though the gluings are described differently, one may show that the two constructions coincide. The two constructions give different presentations of the fundamental group of the boundary manifold.
\end{rmk}

\begin{cor}\label{Combinatorics_Boundary}
	The boundary manifold of a complex line arrangement depends only on the combinatorics of the arrangement.
\end{cor}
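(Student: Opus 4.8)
The plan is to deduce this immediately from the preceding Proposition together with the combinatorial nature of the construction carried out in Section \ref{BoundaryManifoldConstruction}. First I would observe that every piece entering the construction of $B_\A$ is specified purely in terms of the incidence graph $\Gamma_\A$: for each singular point $P$ the block $\mathcal{S}_P$ is $\partial\mathbb{B}_P$ minus a tubular neighborhood of the link $L_P$, and since $L_P$ is a union of $m_P$ fibers of the Hopf fibration of $S^3$ — an arrangement of $m_P$ lines through a point being combinatorially rigid — the homeomorphism type of $\mathcal{S}_P$ together with its boundary tori $T_L$, $L\in\A_P$, depends only on the integer $m_P$, hence only on the valence of $v_P$ in $\Gamma_\A$. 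Likewise each $\mathcal{N}_i = F_i\times S^1$ is determined by $F_i$, which is a sphere with as many holes as there are points on $L_i$, i.e.\ by the valence of $v_L$ in $\Gamma_\A$; the meridians and longitudes $m_L,l_L,m_P,l_P$ are fixed up to isotopy by the homological/linking conditions of Definitions \ref{Def_MerLongTl} and \ref{Def_MerLongTp}, which refer only to these local models.

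Next I would argue that the gluing data are combinatorial as well: the blocks $\mathcal{S}_P$ and $\mathcal{N}_i$ are glued along $T_{L_i}\leftrightarrow T_P$ exactly when $e(L_i,P)$ is an edge of $\Gamma_\A$, and the identification is prescribed unambiguously (meridian to meridian, longitude to longitude) by Definitions \ref{Def_MerLongTl} and \ref{Def_MerLongTp}; no further choices of framing or twisting enter, since the ambiguity in the section $s$ changes the longitude only by adding meridians, which does not affect the homeomorphism type of the result. Consequently, given an isomorphism $\varphi\colon\Gamma_{\A}\xrightarrow{\ \sim\ }\Gamma_{\A'}$ of incidence graphs, matching $\mathcal{S}_P$ to $\mathcal{S}_{\varphi(v_P)}$ and $\mathcal{N}_i$ to $\mathcal{N}_{\varphi(v_{L_i})}$ by the local homeomorphisms above — which can be chosen to agree on the boundary tori since they are all standard — and then gluing along the edges, yields a homeomorphism $B_\A\cong B_{\A'}$.

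Finally I would note that this is precisely the statement already recorded, via Waldhausen's theorem, in \cite{westlund,cosu:08}: the boundary manifold is a graph manifold whose plumbing graph is read off from $\Gamma_\A$ (equivalently, from the dual graph of the blow-up $\hat\A$), and a graph manifold is determined up to homeomorphism by its plumbing graph. So the corollary follows by combining the Proposition with the observation that $\Gamma_\A$ is a combinatorial invariant of $\A$. The only point requiring a little care — rather than a genuine obstacle — is checking that the residual freedom in the construction (the choice of generic line $D$, the trivializations of the $\mathcal{N}_i\aff$, and the section $s$) does not affect the homeomorphism type; this is the content of the Remark preceding the corollary and is handled by the standard fact that re-choosing a longitude by adding copies of the meridian produces a homeomorphic plumbing.
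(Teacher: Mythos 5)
Your argument is correct and is essentially the paper's (much terser) proof: the plumbing construction of $B_\A$ is prescribed entirely by the incidence graph $\Gamma_\A$, which is equivalent to the combinatorics, so an isomorphism of incidence graphs yields a homeomorphism of the boundary manifolds. One caution on your closing ``standard fact'': re-choosing a longitude by adding meridians does \emph{not} in general produce a homeomorphic plumbing (it changes the Euler number of that piece); what makes your step valid here, as your earlier phrasing in effect says, is that a different choice of the section $s$ changes all the longitudes of $\mathcal{N}_i$ simultaneously and is absorbed by a fiber-preserving self-homeomorphism of $F_i\times S^1$, so the glued manifold is unchanged.
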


\begin{proof*}
	The plumbing used to construct $B_\A$ as a graph manifold over $\Gamma_\A$ is combinatorial. The equivalence of $\Gamma_\A$ and the combinatorics of $\A$ lead to the result.
\end{proof*}


\subsection{Fundamental group of $B_\A$}\label{BoundaryManifoldPresentation}
\vspace{0.25cm}
 The fundamental group of $B_\A$ is the group associated to the incidence graph, see~\cite{westlund,cosu:08}. Two types of generators naturally appear: the \emph{meridians} of the lines and the \emph{cycles} related to the graph.

\begin{de}\label{Definition_Meridian}
	Let $L$ be a line in $\CC\PP^2$, and $b$ be a point in $\CC\PP^2\setminus\A$. A homotopy class $\alpha\in\pi_1(\CC\PP^2\setminus\A,b)$ is a \emph{meridian of $L$} if $\alpha$ has a representative $\delta$ constructed as follows: 
	\begin{itemize}
		\item there is a smooth complex analytic disc $\Delta\subset\CC\PP^2$ transverse to $L$ at a smooth point of $\A$ and such that $\Delta\cap L=\left\{b'\right\}\subset L$, and pick out a point $b''\in\partial \Delta$ .
		\item there is a path $a$ in $\CC\PP^2\setminus\A$ from $b$ to $b''\in\partial\Delta$;
		\item $\delta=a\inv\cdot\beta\cdot a$, where $\beta$ is the closed path based in $b'$ given by $\partial \Delta$ (in the positive direction).		
	\end{itemize}
\end{de}

Choose arbitrarily a line $L_0$ of the arrangement. Note that a meridian of $L_0$ is the product of the inverse of some meridians of the lines $L_1,\cdots,L_n$, in $E_\A$. Let $\P$ be the set of singular points of the affine arrangement $\A\aff=\A\setminus L_0$. We assume that $\A$ is ordered. In Sub-section~\ref{BraidedWiringDiagram}, a particular order will be fixed.

\begin{de}
	A \emph{cycle} of the incidence graph $\Gamma_\A$ is an element of $\p(\Gamma_\A,v_{L_0})$.
\end{de}

\begin{rmk}
	The group $\p(\Gamma_\A,v_{L_0})$ is a free group on $b_1(\Gamma_\A)$ generators.
\end{rmk}

	We construct a generating system $\mathscr{E}$ of cycles of $\Gamma_\A$  as follows. Let $\T$ be the maximal tree of $\Gamma_\A$ containing the following edges:
\begin{itemize}
	\item $e(L,P)$ for all $P\in L_0$, and $L\in\A$;
	\item $e(L_{\nu(P)},P)$ for all $P\in \P$ and  $\nu(P)=\min\{\ j\ \mid\ L_i\in\A_P\ \}$.
\end{itemize}

\begin{rmk}
	Up to the choice of an order on $\A$, this maximal tree is uniquely determined.
\end{rmk}

An edge in $\Gamma_\A \setminus \T$ is of the form $e(L_j,P)$, with $P\in\P$ and $L_j\in\A \setminus L_0$. By definition of a maximal tree, there exists a unique path $\lambda_{P,j}$ in $\T$ joining $v_{P}$ and $v_{L_j}$. The unique cycle of $\Gamma_\A$ containing the three line-vertices $v_{L_0}$, $v_{L_{\nu(P)}}$ and $v_{L_j}$, and no other line-vertex, is denoted by:
\begin{equation*}
	\xi_{\nu(P),j}=\lambda_{P,j} \cup e(L_j,P).
\end{equation*}
Let $\mathscr{E}$ be the set of cycles of $\Gamma_A$ of the form $\xi_{s,t}$. To each $\xi_{s,t}$ in $\mathscr{E}$ will correspond a cycle of $\pi_1( B_\A,X_0)$ (where $X_0\in\N_0$), that we denote $\mathfrak{e}_{s,t}$. 

\begin{ntc}
	We denote $[a_1,\cdots,a_m]$  the equality of all the cyclic permutations 
	\begin{equation*}
		a_1\cdots a_m=a_2\cdots a_m a_1=\cdots = a_m a_1\cdots a_{m-1}.
	\end{equation*}
\end{ntc}

For $i=0,\cdots,n$, let $\alpha_i$ be a meridian of $L_i$ contained in the boundary of a regular neighborhood of $L_0$, and for $\xi_{s,t}\in\mathscr{E}$, let $\mathfrak{e}_{s,t}$ be a non trivial cycle contained in $\left(\bigcup\limits_{v_P\in\xi_{s,t}} \mathcal{S}_P \right) \cup \left(\bigcup\limits_{v_L\in\xi_{s,t}} \N_L\right)$, coming from the gluing over the edge $e(L_t,P)$ where $P$ is $ L_s \cap L_t$. We assume that $(s,t)\neq (s',t')$ is equivalent to $ \mathfrak{e}_{s,t}\cap \mathfrak{e}_{s',t'}=X_0$.

\begin{propo}\label{BoundaryPresentation} 
 Let $\alpha_i$ and $\mathfrak{e}_{s,t}$ be as previously defined. For any singular point $P=P_{i_1,\cdots,i_m}$ with multiplicity $m$ and $i_1=\nu(P)$, let 
\begin{equation*}
	\mathcal{R}_P = [\alpha_{i_m}^{c_{i_m}},\cdots,\alpha_{i_2}^{c_{i_2}},\alpha_{i_1}], \hbox{ where } c_{i_j}=\mathfrak{e}_{i_1,i_j} \hbox{ for all } j=2,\cdots,m.
\end{equation*}
The fundamental group of the boundary manifold $B_\A$ admits the following presentation:
\begin{equation*} 	
	\p(B_\A,X_0)=\langle\alpha_0,\alpha_1,\cdots,\alpha_n,\mathfrak{e}_{s_1,t_1},\cdots,\mathfrak{e}_{s_l,t_l}\ |\ \bigcup\limits_{P\in\P} \mathcal{R}_P \rangle. 
\end{equation*}
\end{propo}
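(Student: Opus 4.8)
The plan is to compute $\pi_1(B_\A, X_0)$ directly from the graph-manifold structure, using the standard van Kampen procedure adapted to plumbing graphs. The building blocks are the pieces $\S_P$ (for $P \in \Q$) and the $\N_i = F_i \times S^1$ (for $L_i \in \A$), glued along the tori $T_{L_i}$, $T_P$ according to the edges of $\Gamma_\A$. First I would record the fundamental group of each piece: $\N_i = F_i \times S^1$ has $\pi_1$ the product of a free group (on meridians of the points on $L_i$, subject to the single surface relation that their product is the boundary class) with the central $\ZZ$ generated by the $S^1$-fiber, which is the meridian $\alpha_i$ of $L_i$; meanwhile $\S_P = \partial \mathbb{B}_P \setminus \mathbf{T}$ is the complement in $S^3$ of the link of an ordinary multiple point of multiplicity $m_P$, whose fundamental group is the pure-braid-type group with presentation $\langle \alpha_{i_1}, \ldots, \alpha_{i_m} \mid [\alpha_{i_m}, \cdots, \alpha_{i_1}] \rangle$ after suitable identification of the boundary meridians (this is where the relation $\mathcal{R}_P$ comes from, the cyclic bracket encoding that the product of all local meridians is central).

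Next I would choose the base point $X_0 \in \N_0$ and push everything there along the maximal tree $\T$: for each edge $e(L,P)$ of $\T$ the two boundary tori are glued meridian-to-meridian and longitude-to-longitude, so collapsing $\T$ amounts to taking the iterated amalgamated product of all the pieces over these tori, producing one copy of each meridian $\alpha_i$ and no extra generators. The point-relations $\mathcal{R}_P$ are inherited from the $\S_P$'s, with $c_{i_j} = \mathfrak{e}_{i_1, i_j}$ accounting for the conjugation needed to transport the local meridian of $L_{i_j}$ at $P$ back to the based meridian $\alpha_{i_j}$ living near $L_0$: along the tree path the meridian is only defined up to conjugation by the longitude it was carried along, and that longitude, once we pass through the non-tree edge, is precisely the cycle class $\mathfrak{e}_{i_1,i_j}$. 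Finally, each edge of $\Gamma_\A \setminus \T$, being of the form $e(L_j, P)$, contributes via van Kampen exactly one new generator — the class $\mathfrak{e}_{s,t}$ of the loop running around the corresponding cycle $\xi_{s,t} \in \mathscr E$ — and no new relation beyond what is already bundled into $\mathcal{R}_P$, since gluing a torus to a torus by a homeomorphism identifying meridian and longitude imposes the two relations $m \sim m'$, $l \sim l'$, one of which names the new generator and the other of which is absorbed into the surface/point relations already listed. Counting: $b_1(\Gamma_\A)$ non-tree edges give $b_1(\Gamma_\A)$ generators $\mathfrak{e}_{s,t}$, matching the free rank of $\pi_1(\Gamma_\A, v_{L_0})$ asserted in the remark, and the relations are indexed by $\P$.

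The main obstacle I expect is the bookkeeping of the conjugating factors $c_{i_j}$: one must verify that transporting the local meridian of $L_{i_j}$ at $P$ from the torus $T_{L_{i_j}} \subset \partial \S_P$ back to $\alpha_{i_j}$ near $L_0$, along the unique tree path $\lambda_{P,j}$ together with the non-tree edge $e(L_j,P)$, conjugates it by exactly $\mathfrak{e}_{i_1,i_j}$ and nothing more. This requires carefully tracking how the meridian–longitude framings of Definitions~\ref{Def_MerLongTl} and~\ref{Def_MerLongTp} match up across each gluing torus, and checking that the section $s$ chosen on $\N_i\aff$ is compatible all the way along the tree — essentially that the tree edges, being glued longitude-to-longitude, introduce no conjugation, so that the only conjugation is the one picked up crossing the single non-tree edge that closes the cycle $\xi_{i_1,j}$. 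Once this matching is pinned down, the presentation follows by assembling the van Kampen data over $\Gamma_\A$; the assumptions that the $\mathfrak{e}_{s,t}$ meet pairwise only at $X_0$ and that the $\alpha_i$ lie in $\partial$(neighborhood of $L_0$) guarantee these classes are well defined and that no further identifications occur.
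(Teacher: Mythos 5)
Your proposal follows essentially the same route as the paper's proof of Proposition~\ref{BoundaryPresentation}: present $\pi_1(\mathcal{S}_P)$ and $\pi_1(\N_k)$ piece by piece, glue along the maximal tree $\T$ by Seifert--Van Kampen (with a contractible set joining the base points), treat each edge of $\Gamma_\A\setminus\T$ as an HNN-type gluing contributing one new generator $\mathfrak{e}_{s,t}$, and absorb the torus identifications into the relations $\mathcal{R}_P$, the exponents $c_{i_j}=\mathfrak{e}_{i_1,i_j}$ recording transport along the tree plus the closing edge. So the skeleton is the paper's. However, there are concrete gaps. First, your account of the non-tree-edge step is off: gluing along such an edge is an HNN extension over a torus, so the stable letter $\mathfrak{e}_{s,t}$ is a genuinely new generator and \emph{both} torus relations survive, as conjugation relations $\mathfrak{e}_{s,t}\inv\, m\, \mathfrak{e}_{s,t}=m'$ and $\mathfrak{e}_{s,t}\inv\, l\, \mathfrak{e}_{s,t}=l'$; neither of them ``names'' the new generator. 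These conjugation relations are precisely what turn the local meridians $y_{P,i_j}$ into $\alpha_{i_j}^{\mathfrak{e}_{i_1,i_j}}$ once the local generators are eliminated, and your argument must say this explicitly to arrive at the stated form of $\mathcal{R}_P$. Second, the proposition indexes the relations by $\P$ (the affine singular points), whereas the gluing a priori produces one relation for every point of $\Q$; you never explain why the relations at the points of $L_0$ disappear. The paper's proof ends with exactly this step: by the choice of $\T$ and of the meridians $\alpha_i$ inside a neighborhood of $L_0$, the relators at points of $L_0$ take the form $[\alpha_{0_m},\dots,\alpha_{0_2},\alpha_0]$ and are discarded using the centrality of $\alpha_0$ in $\pi_1(\N_0)$ and the identifications of the $g_{0,i}$.

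A further point of friction is your description of $\pi_1(\N_i)$ as the free group on the meridians of the points of $L_i$ ``subject to the single surface relation that their product is the boundary class,'' centrally extended by $\alpha_i$. The paper's proof uses \emph{no} product relation in $\pi_1(\N_k)$ (only centrality of $\alpha_k$), and whether such a relation is present, and in which framing, is governed by the regluing of $\mathbf{T}_i^\infty$ with the twist described in the Remark following Definition~\ref{Def_MerLongTp} --- exactly the meridian/longitude bookkeeping you defer to the end. As written, if you keep your surface relation the outcome is not literally the presentation claimed in the proposition (there would be an extra relation per line), and if you drop it you must justify that the affine section framing of Definitions~\ref{Def_MerLongTl} and~\ref{Def_MerLongTp} lets you do so. This is not a side issue: it is the one place where your sketch and the paper's proof actually diverge, so it needs to be pinned down rather than flagged as bookkeeping.
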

It is worth noticing that the  $\mathfrak{e}_{s,t}$ are not uniquely defined (see details in the proof).

\begin{proof*} 
	Consider $P\in\mathcal{Q}$. Assume that $P=P_{i_1,\cdots,i_m}$. Let  $y_{P,i_1},\dots, y_{P,i_m}$ be the 'local' meridians of the line $L_i$ in $\partial\mathbb{B}_P$. We have the following presentation of $\p(\mathcal{S}_P)$:
	\begin{equation*}
		\p( \mathcal{S}_P )=\langle y_{P,i_1},\cdots,y_{P,i_m}\ \mid\ [y_{P,i_m},\cdots,y_{P,i_1}] \rangle.
	\end{equation*}
	Remark that, according with Definition~\ref{Def_MerLongTl}, $y_{P,i_j}$ is a meridian of $T_{i_j}$ and a longitude is the product of the other $y_{P,i_k}$.
	
	Consider $k\in\set{0,\cdots,n}$. Let $\mathcal{Q}\cap L_k=\left\{ P_{k_1},\cdots,P_{k_l} \right\}$. Let $g_{k,k_i}$ be the image of a meridian in $F_k$ around $P_{k_i}$, viewed in $F_k\times\set{1}\subset~\mathcal{N}_k$, and $\alpha_k\in\p(\mathcal{N}_k)$ a meridian of $L_k$ contained in a regular neighborhood of $L_0$. We have the following presentation of $\p(\mathcal{N}_k)$:
	\begin{equation*}
		\p(\mathcal{N}_k)=\langle g_{k,k_1},\cdots,g_{k,k_l}, \alpha_k\ \mid\ \forall i\in\left\{1,\cdots,l\right\},\ \alpha_k\inv \cdot g_{k,k_i} \cdot \alpha_k = g_{k,k_i} \rangle.
	\end{equation*}
	Remark that according with Definition~\ref{Def_MerLongTp}, $g_{k,k_i}$ is a longitude and $\alpha_k$ is a meridian of $T_{P_i}$.
	
	As a first step, we only glue the $\mathcal{N}_k$'s and the $\mathcal{S}_P$'s over the edges of $\T$. To do this, we use Seifert-Van Kampen's Theorem, and we consider a contractible set $\Theta$ homeomorphic to $\T$ and joining the base points of the $\mathcal{N}_k$'s and the $\mathcal{S}_P$'s. The fundamental group of $B_\A$ is computed relative to $\Theta$.

	As a second step, we glue over the edges of $\Gamma_\A-\T$ (or equivalently the elements of~$\mathscr{E}$). Then we use Seifert-Van Kampen's Theorem and HNN-extension; and we denote by $\mathfrak{e}_{s,t}$ the cycle coming from the glue due to the edge $e(L_t,P)$, with $P=L_s \cap L_t$.
	
	Note that if $P_i\in L_j$, then the meridian $\alpha_j$ is identified with $y_{P_i,j}$ and $g_{j,i}$ is identified with the product of generators of $\p(\mathcal{S}_{P_i})$ not equal to $y_{P_i,j}$. After first doing a simplification, we obtain the following presentation of the fundamental group of the boundary manifold:
	\begin{equation*}
		\p(B_\A)=\langle\alpha_0,\alpha_1,\cdots,\alpha_n,\mathfrak{e}_{s_1,t_1},\cdots,\mathfrak{e}_{s_l,t_l}\ |\ \bigcup\limits_{P\in\mathcal{Q}} \mathcal{R}_P \rangle. 
	\end{equation*}
	
	By construction of $\T$, the relations $\mathcal{R}_P$, for $P\in L_0$, are of the form:
	\begin{equation*}
		[\alpha_{0_m},\alpha_{0_{m-1}},\cdots,\alpha_{0_2},\alpha_0]. 
	\end{equation*}
	But, the presentation of $\p(\mathcal{N}_0)$ implies that $\alpha_0$ commutes with the $g_{0,i}$ and by identification it commutes with $\alpha_i$, for $i\in\set{1,\cdots,n}$. Which implies that for all $P\in L_0$, the relations $\mathcal{R}_P$ are trivial.
\end{proof*}

\begin{ex}
	The fundamental group of the didactic example boundary manifold is:
	\begin{multline*}
		<\alpha_0,\alpha_1,\alpha_2,\alpha_3,\alpha_4,\epsilon_{1,2},\epsilon_{1,3},\epsilon_{1,4},\epsilon_{2,3},\epsilon_{3,4}\ \mid \\
		[\alpha_4^{\epsilon_{1,4}},\alpha_2^{\epsilon_{1,2}},\alpha_1]
		,\ [\alpha_3^{\epsilon_{1,3}},\alpha_1]
		,\ [\alpha_3^{\epsilon_{3,4}},\alpha_4] 
		,\ [\alpha_3^{\epsilon_{2,3}},\alpha_2]>.
	\end{multline*}
\end{ex}


\section{The complement}


Let $E_\A$ be the complement of a tubular neighborhood of $\A$. As before, we choose an arbitrary line $L_0\in\A$, and let $\CC^2$ be $\CC\PP^2-L_0$.

\subsection{Braided wiring diagrams}\label{BraidedWiringDiagram}
\vspace{0.25cm}
 
Consider a linear projection $\pi : \CC^2 \rightarrow \CC$, \emph{generic} in the sense that: 
\begin{itemize}
	\item For all $i\in\left\{1,\dots,n\right\}$, the restriction of $\pi_{|L_i}$ is a homeomorphism.
	\item Each multiple point lie in a different fiber of $\pi$.
\end{itemize}

We suppose that the points $x_i=\pi(P_i)$ have distinct real parts, and that we can order the points of $\pi(\P)$ by increasing real parts, so that $Re(x_1)<Re(x_2)\cdots<Re(x_k)$. A~smooth path $\gamma:[0,1]\rightarrow \CC$ emanating from $x_0$ with $Re(x_0)<Re(x_1)$, passing through $x_1,\cdots,x_k$ in order, and horizontal in a neighborhood of each $x_i$ is said to be \emph{admissible}.
 
\begin{de}{\label{Wiring}}
	The \emph{braided wiring diagram} associated to an admissible path $\gamma$ is defined by:
	\begin{equation*}
		W_{\A}=\left\{(x,y)\in \A \mid\ \exists t\in [0,1],\ p(x,y)=\gamma(t)\ \right\}.
	\end{equation*}
	The trace $\omega_i=W_\A\cap L_i$ is called the \emph{wire} associated to the line
$L_i$.
\end{de}

	Note that if $\A$ is a real complexified arrangement, then $\gamma=[x_0-\eta,x_k+\eta]\subset \RR$; and $W_\A\simeq \A\cap\RR^2$.

\begin{rmk}
	~\\
	i) The braided wiring diagram depends on the path $\gamma$, and on the projection $\pi$.\\
	ii) The set of singular points $\P$ is contained in $W_\A$.
\end{rmk}

We re-index the lines $L_1,\cdots,L_n$ such that:
\begin{equation*}
	I_i < I_j\ \Longleftrightarrow\ i < j,
\end{equation*}
where $I_i=Im(L_i\cap \pi\inv(x_0))$. On the representation described bellow of the braided wiring diagram, this re-indexation implies that the lines are ordered at the left of the diagram from the top to bottom. This fixes an order on $\A$.

Since the $x$ coordinates of the points of $W_\A$ are parametrized by
$\gamma$, the wiring diagram can be seen as a one dimensional object inside
$\RR^3\simeq [0,1] \times \CC$. Consider its image by a generic projection
$\gamma([0,1])\times \CC \rightarrow \RR^2$. If we take a plane projection of this
diagram (assume, for example, that it is in the direction of the vector
$(0,0,1)$ -that is, in the direction of the imaginary axis of the fibre-), we
obtain a planar graph. Observe that there are nodes corresponding to the image
of actual nodes in the wiring diagram in $\RR^3$ (that is, to a singular point of
the arrangement). Other nodes appear from the projection of undergoing and
overgoing branches of the wiring diagram in $\RR^3$. The two types of nodes are
called by W.~Arvola \emph{actual and virtual crossing}.

If we represent the virtual crossings in the same way that they are represented as in the case of braid diagrams, we obtain a schematic representation of the wiring diagram as in Figure \ref{ExempleWiring}. From now on, we will refer to this representation as the wiring diagram itself.
By genericity, we  assume that two crossings (actual or virtual) do not lie on
the same vertical line.

It is worth noticing that from the braided wiring diagram, one may extract the braid monodromy of $\A$, related to the generic projection $\pi$. The local equation of a multiple point is of the form $y^m - x^m$, where $m$ is the multiplicity, and the corresponding local monodromy is a full twist in the braid group with $m$ strands.


\subsection{Fundamental group of the complement} \label{color}
\vspace{0.25cm}

We recall briefly the method due to W.~Arvola \cite{arvola} to obtain a presentation of the fundamental group of the complement from a braided wiring diagram $W_\A$.
The algorithm goes as follows: start from the left of the diagram, assigning a generator $\alpha_i$ to each strand. Then follow the diagram from the left to the right, assigning a new word to the strands going through each crossing. The rules for this new assignation are given in Figure~\ref{Arvola}, where the $a_i$'s are words in the $\alpha_i$'s.

\begin{figure}[!ht]
	\begin{tikzpicture}
	\begin{scope}[shift={(0,0)},xscale=4]
		\mcross{0}{4}{0}
		\node[left] at (0,0) {$a_m$};
		\node[left] at (0,0.9) {$a_{m-1}$};
		\node[left] at (0,1.55) {$\vdots$};
		\node[left] at (0,2) {$a_2$};
		\node[left] at (0,3) {$a_1$};
		\node[right] at (1,0) {$a_1$};
		\node[right] at (1,0.9) {$(a_2)^{a_1}$};
		\node[right] at (1,1.55) {$\vdots$};
		\node[right] at (1,2.1) {$(a_{m-1})^{a_{m-2} \cdots a_1} \equiv (a_{m-1})^{a_m\inv}$};
		\node[right] at (1,3) {$(a_m)^{a_{m-1}\cdots a_1}\equiv a_m $};
		\node at (0.5,-0.5) {real crossing};
		\node at (0.5,-1) {(A)};
	\end{scope}
	\begin{scope}[shift={(-2,-4.5)},scale=2]
		\ocross{0.25}{0}
		\node[left] at (0,0.25) {$a_j$};
		\node[left] at (0,1.25) {$a_i$};
		\node[right] at (1,0.25) {$a_i$};
		\node[right] at (1,1.25) {$a_j^{a_i}$};
		\node at (0.5,-0.25) {positive virtual crossing};
		\node at (0.5,-0.5) {(B)};						
	\end{scope}
	\begin{scope}[shift={(4,-4.5)},scale=2]
		\ucross{0.25}{0}
		\node[left] at (0,0.25) {$a_j$};
		\node[left] at (0,1.25) {$a_i$};
		\node[right] at (1,0.25) {$a_i^{a_j\inv}$};
		\node[right] at (1,1.25) {$a_j$};		
		\node at (0.5,-0.25) {negative virtual crossing};
		\node at (0.5,-0.5) {(C)};
	\end{scope}
\end{tikzpicture}
	\caption{Computation of Arvola's words}
	\label{Arvola} 
\end{figure}

The notation in Figure~\ref{Arvola} is $a_i^{a_j}=a_j\inv a_i a_j$.

For each actual crossing $P$ that corresponds to a singular point of $\A$, suppose that the strands are labeled with the words $a_1,\ldots,a_m$ with respect to their order in the
diagram at this point $P$, from top to bottom, where $m=m_P$ is the multiplicity of $P$.
Then the following relations are added to the presentation:
\begin{equation*}
R_P = [a_{m},\ldots,a_1] = \{a_{m} \cdots a_1 = a_1 a_{m} \cdots a_2=\cdots =a_{m-1}\cdots a_1 a_m\}.
\end{equation*}

They correspond to the action of a half-twist on the free group, whereas the
action of a virtual crossing is given by the corresponding braid.
 
\begin{figure}[!ht]
	\begin{tikzpicture}
	\begin{scope}[shift={(0,-5)},x=25pt,y=30pt]

		\node at (0,3) [below,left] {1};
		\node at (0,2) [below,left] {2};
		\node at (0,1) [below,left] {3};		
		\node at (0,0) [below,left] {4};

		\ncross{0}{0};\ncross{1}{0};\ncross{2}{0};\ncross{3}{0};
		\ocross{0}{1};\ncross{2}{1};\ncross{3}{1};
		\ncross{0}{2};\mcross{1}{3}{2};
		\ucross{0}{3};\ncross{2}{3};\ncross{3}{3};
		\ncross{0}{4};\ucross{1}{4};\ncross{3}{4};
		\ncross{0}{5};\ocross{1}{5};\ncross{3}{5};
		\ocross{0}{6};\ncross{2}{6};\ncross{3}{6};
		\rcross{0}{7};\ncross{2}{7};\ncross{3}{7};
		\ncross{0}{8};\ucross{1}{8};\ncross{3}{8};
		\ncross{0}{9};\ncross{1}{9};\ucross{2}{9};
		\ncross{0}{10};\ncross{1}{10};\rcross{2}{10};
		\ncross{0}{11};\ocross{1}{11};\ncross{3}{11};
		\ncross{0}{12};\rcross{1}{12};\ncross{3}{12};
		
		\node at (0.5,3) [above] {$\alpha_1$};
		\node at (0.5,2) [above] {$\alpha_2$};
		\node at (0.5,1) [above] {$\alpha_3$};
		\node at (0.5,0) [above] {$\alpha_4$};
		
		\node at (2.3,1.1) [right, below] {$\alpha_4^{\alpha_3}$};
		\node at (2.5,0) [below] {$\alpha_3$};
		\node at (5,0) [below] {$\alpha_1^{\alpha_3\inv}$};
		\node at (3,1) [right] {$\alpha_1$};
		\node at (3.3,2) [above] {$\alpha_2^{\alpha_1}$};
		\node at (4,3) [above] {$\alpha_4^{\alpha_3}$};
		\node at (5.2,1.1) [below] {$\alpha_2^{\alpha_1\alpha_3\inv}$};
		\node at (5,2) [above] {$\alpha_3$};
		\node at (7,2) [above] {$\alpha_2^{\alpha_1}$};
		\node at (7,0) [below] {$\alpha_3$};
		\node at (7,1) [above] {$\alpha_1$};
		\node at (8.2,1.2) [left] {$\alpha_3$};
		\node at (9,0) [below] {$\alpha_1$};
		\node at (10,1.1) [below] {$\alpha_2^{\alpha_1\alpha_3\inv}$};
		\node at (10,2.1) [below] {$\alpha_4$};
		\node at (10,3) [above] {$\alpha_3$};
		\node at (12,3) [above] {$\alpha_4$};
		\node at (10.8,2.2) [right] {$\alpha_3$};
		\node at (12,1) [below] {$\alpha_3$};
		\node at (12.2,2) [above] {$\alpha_2^{\alpha_1}$};
		
	\end{scope}
\end{tikzpicture}
	\caption{The braided wiring diagram of the affine part of the didactic example}
	\label{ExempleWiring}
\end{figure}

\begin{thm}[Arvola \cite{arvola}]\label{ThmArvola}
	For $i=0,\dots,n$, let $\alpha_i$ be the meridians of the lines $L_i$. The fundamental group of the exterior of $\A$ admits the following presentation
	\begin{equation*}
		\p(E_\A)=< \alpha_1,\cdots,\alpha_n\ \mid\ \bigcup\limits_{P} R_P >,
	\end{equation*}
	where $P$ ranges over all the actual crossings of the wiring diagram $W_\A$.	
\end{thm}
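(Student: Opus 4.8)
The plan is to reduce Theorem~\ref{ThmArvola} to the classical Zariski--van Kampen theorem via the braid monodromy extracted from the braided wiring diagram $W_\A$. First I would fix the generic projection $\pi$ and the admissible path $\gamma$, and consider the generic fiber $F_0 = \pi\inv(x_0) \cap (\CC^2 \setminus \A)$, which is a complex line with $n$ punctures, so that $\pi_1(F_0)$ is free on the meridians $\alpha_1, \dots, \alpha_n$ read off at the left end of the diagram. The map $\pi$ restricted to $\CC^2 \setminus \A$ is a locally trivial fibration away from the finitely many singular fibers over the points $x_i = \pi(P_i)$; by the Zariski--van Kampen method, $\pi_1(\CC^2 \setminus \A)$ is the quotient of $\pi_1(F_0)$ by the relations $\alpha_j = \rho_i(\alpha_j)$ for each $i$, where $\rho_i \in \mathrm{Aut}(\pi_1(F_0))$ is the monodromy around $x_i$. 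Since $L_0$ is generic and $E_\A$ is obtained from $\CC\PP^2 \setminus \A$ (homotopy equivalent to $\CC^2 \setminus \A$ since deleting the neighborhood of $L_0$ only kills the relation $\alpha_0 \cdots$, which is already implicit), it suffices to identify these monodromy relations with the $R_P$.

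The key step is the translation of the wiring diagram into the monodromy action. Here I would argue as follows: the path $\gamma$ decomposes into horizontal segments near each $x_i$ joined by arcs; along each arc the strands of $W_\A$ braid, and this braiding is exactly recorded by the virtual crossings. Tracking a generator along the diagram and applying rules (B) and (C) of Figure~\ref{Arvola} computes the image of $\alpha_j$ under the pure/full braid carrying $F_0$ to the fiber just before $x_i$; this is a standard fact about how a geometric braid acts on the free group of a punctured disc (the Artin representation), and rules (B),(C) are precisely the two elementary generators of this action. At the singular fiber over $x_i$, the local equation $y^{m} - x^{m}$ (as noted in the text) gives local monodromy equal to the full twist $\Delta^2$ on the $m$ strands meeting at $P_i$; the full twist fixes a loop iff that loop commutes with the product of the $m$ meridians in their cyclic order, which is exactly the content of the relation $R_{P} = [a_m, \dots, a_1]$ written in the words $a_1, \dots, a_m$ currently labelling those strands. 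Composing: the relation contributed by the fiber over $x_i$ is $\rho_i = \mathrm{id}$ on $\pi_1(F_0)$, and pulling this back through the preceding braid gives precisely $R_{P_i}$ in the original generators $\alpha_1, \dots, \alpha_n$.

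The main obstacle I expect is the careful bookkeeping of the \emph{conjugating words}: one must check that the word $a_k$ assigned by Arvola's algorithm to the strand entering $P_i$ in position $k$ really equals $\beta_i(\alpha_{\sigma(k)})$, where $\beta_i$ is the geometric braid traced out by $\gamma$ from $x_0$ to $x_i$ and $\sigma$ is the resulting permutation of strands — in other words, that Arvola's rules (B) and (C) faithfully implement the Artin action. This is where the distinction between positive and negative virtual crossings (over- vs under-crossings of the wiring diagram in $\RR^3$) matters, since they correspond to inverse elementary braids, hence to the conjugation $a_j \mapsto a_j^{a_i}$ versus $a_i \mapsto a_i^{a_j\inv}$; getting the orientations consistent with the chosen orientation of the meridians and of $\gamma$ requires care. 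Once that identification is secured, assembling the relations over all $x_i$ and invoking Zariski--van Kampen, together with the remark that a meridian $\alpha_0$ of $L_0$ is redundant (it is a product of inverses of the $\alpha_i$), yields the stated presentation of $\pi_1(E_\A)$ on $\alpha_1, \dots, \alpha_n$ with relators $\bigcup_P R_P$. A final routine point is that passing from $\CC\PP^2 \setminus \A$ to the exterior $E_\A$ (deleting a tubular neighborhood rather than $\A$ itself) does not change $\pi_1$, since $E_\A$ is a deformation retract of $\CC\PP^2 \setminus \A$.
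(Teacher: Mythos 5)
The paper itself gives no proof of Theorem~\ref{ThmArvola}: it is quoted from Arvola's work, so there is no internal argument to compare with. Your route --- Zariski--van Kampen for the generic projection $\pi$, the Artin action of the braid group on the free group $\pi_1(F_0)=\langle\alpha_1,\dots,\alpha_n\rangle$, and the local monodromy $y^m-x^m$ giving a full twist whose fixed-point relations are the cyclic relations $[a_m,\dots,a_1]$ --- is exactly the standard proof strategy in the literature (Arvola, and Cohen--Suciu's identification of braided wiring diagrams with braid monodromy), and the skeleton is sound, including the reduction to the affine complement and the redundancy of $\alpha_0$. (One small slip: since $L_0\in\A$, the space $\CC\PP^2\setminus\A$ \emph{equals} $\CC^2\setminus(\A\setminus L_0)$; nothing is ``killed'' in passing between them, and $E_\A$ is a deformation retract of this complement.)

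The genuine gap is your treatment of the conjugating braids. You assert that the braiding of the fiber along $\gamma$ up to the point just before $x_i$ ``is exactly recorded by the virtual crossings,'' i.e.\ by rules (B) and (C) alone. That is false as soon as the path has already passed other actual crossings: the fiber cannot be transported through a singular fiber, so the admissible path must be perturbed by a small half-circle around each intermediate $x_j$, and each such detour acts on $\pi_1(F_0)$ by a (local) half-twist on the $m_{P_j}$ strands meeting at $P_j$ --- this is precisely rule (A) of Figure~\ref{Arvola}, which your proposal never uses. Arvola's labels $a_1,\dots,a_m$ at an actual crossing $P_i$ incorporate these half-twist contributions from all earlier actual crossings sharing a wire with $P_i$; if the conjugating braid were only the product of virtual crossings, the pulled-back relations would in general differ from $R_{P_i}$ (already for a real arrangement, where there are no virtual crossings at all, your scheme would yield the unconjugated relations at every point regardless of rule (A), which is only correct because rule (A) happens to produce the right reordering and conjugation --- a fact you must prove, not bypass). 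So the bookkeeping step you flag as the ``main obstacle'' must be enlarged: one has to check that the composite of rules (A), (B), (C) along the modified path realizes the Artin action of the braid carrying $F_0$ to the fiber just before $x_i$, with (A) corresponding to the positive half-twist of the detour. With that identification in place (and the sign conventions you already mention), the rest of your argument goes through.
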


\begin{ex}
	The braided wiring diagram of the didactic example is pictured in Figure \ref{ExempleWiring}. Its fundamental group is :
	\begin{equation*}
		<\alpha_1,\alpha_2,\alpha_3,\alpha_4\ \mid 
		\ [\alpha_4^{\alpha_3},\alpha_2,\alpha_1],
		\ [\alpha_3,\alpha_1],
		\ [\alpha_4,\alpha_3],
		\ [\alpha_3,\alpha_2^{\alpha_1}]>.
	\end{equation*}
\end{ex}


 \section{The inclusion map}\label{Inclusion_Map}

The main result of the paper is a complete description of the map induced on the fundamental groups, by the inclusion of the boundary in the exterior of an arrangement $\A$. The computation is done in two main steps.

Let $W_{\A}$ be the wiring diagram
  associated to the choice of a generic projection $\pi$ and an admissible path $\gamma$. 
We start by choosing a generating system $\mathscr{E}= \left\{\xi_{s,t}\right\}$ of cycles of the incidence graph $\Gamma_\A$. These cycles can be directly seen in $W_{\A}$,
 since it contains all the singular points and the vertices of $\Gamma_\A$ can be identified with their corresponding wires between two singular points.
Then, the first step is to "push" each cycle $\xi_{s,t}$ from $W_\A$ to the boundary manifold $B_\A$. The procedure is described in Section \ref{bound}, 
 and gives an explicit family $\left\{\epsilon_{s,t}\right\}$ of $\pi_1(B_\A)$, indexed by $\mathscr{E}$. This family, with the set of meridians of
 the lines,  generates $\pi_1(B_\A)$.  
The second step is to compute the images of these generators $\epsilon_{s,t}$ by the inclusion map. We use an ad hoc Arvola's algorithm to make the computations directly from
 $W_\A$, see Section \ref{inclusion}. Then the map is described in Theorem \ref{MainResult}.
 
 In Section \ref{sequ}, we examine the kernel of the map; this provides an exact sequence involving $\pi_1(B_\A)$ and $\pi_1(E_\A)$, see Theorem \ref{sequence}. We deduce in Section \ref{homotopy} a presentation of $\pi_1(E_\A)$ where the generators of $\p(B_\A)$ appear explicitly. This presentation defines a complex whose homotopy type is the same that $E_\A$, see Proposition \ref{type}. 
 

\subsection{Cycles of the boundary manifold} \label{bound}
\vspace{0.25cm}

	We suppose that the admissible path $\gamma$ emanates from $x_0$ and  goes through $x_1,\dots,x_k$, the images of the singular points of $\A$ by $\pi$, ordered by their real parts. Let  $\mathscr{E}= \left\{\xi_{s,t}\right\}$ be the generating set of cycles of $\Gamma_\A$ defined in Section~\ref{BoundaryManifoldPresentation}.  
	
Each cycle $\xi_{s,t} \in \mathscr{E}$ is sent to $B_\A$ via $W_\A$, as follows. Let $X_0=(x_0,y_0)$ be a point of $\mathcal{N}_0$ such that $x_0=\gamma(0)$. The vertices of $\xi_{s,t} $ of the form $v_{L_0}$ or $v_{P}$ with $P \in L_0$ and the edges of the form $e(L_0,P)$, with $P\in L_0$, are all sent to the point $X_0$. The edges $e(L_i,P)$, with $i\neq 0$ and $P\in L_0$, are sent to segments from $X_0$ to the points $L_i\cap \pi\inv(x_0)$. Then the remaining vertices of the form $v_{L} \in V_L(\A) $ are sent to $L \cap \pi^{-1}(x_0)$. Let $\xi_{s,t}$ denote now the cycle of $W_{\A}$, relative to the left endpoints, where the vertices $v_P \in V_P(\A)$ are identified with the singular points $P$, and the edges with their corresponding wire of $W_{\A}$. 
 
 A \emph{framed cycle} is obtained by pushing a cycle $\xi_{s,t}$ to $B_\A$. This cycle $\xi_{s,t}$ consists of four arcs. Two of them are segments in $\mathcal{N}_0$, the two others are the parts in $L_s$ and $L_t$, see Figure~\ref{BoundaryManifoldGlobal}. The last two arcs go through several actual crossings of $W_\A$ and can be viewed as a union of small arcs. Each of them is projected to  $B_\A$  in the direction $[0:i:0]$ and their images are glued together as follows. For each actual crossing $P$, modify $\gamma$  slightly so that it makes a half circle of (small) radius $\eta_{P}$ around $x=\pi(P)$ in the positive sense. Choose $\eta_{P}$ so that the preimage of this half circle lies in $\mathcal{N}_i\cap\mathcal{S}_P\subset B_\A$ (also called $T_P$ or $T_{L_i}$ in Sub-section~\ref{BoundaryManifoldConstruction}), where $i\in\set{s,t}$. See Figure~\ref{BoundaryManifoldCross:A}. We avoid the intersection point $P:(x_P,y_P)$ of $L_s$ and $L_t$ as follows. Consider $\mathcal{S}_P$ as a polydisc, and join the two end points with the union of the two segments joining these end points with the point of $\pi\inv(x_P-\eta_P) \cap \mathcal{S}_P$ having the smallest real part, see Figure~\ref{BoundaryManifoldCross:B}.	
	
	The class of the obtained cycle in $\pi_1(B_\A,X_0)$, denoted by $\epsilon_{s,t}$, equals $\mathfrak{e}_{s,t}$ in the presentation of $\p(B_\A)$ found in Proposition~\ref{BoundaryPresentation}.

\begin{figure}[!ht]
	\subfigure[Near $P'\neq L_s\cap L_t$]{
		\begin{tikzpicture}
	\begin{scope}[shift={(-4,0)}, scale=0.6]
	
		\node (C1) at (2,2.5) {};
			\node (C11) at (2,1) {};
			\node (C12) at (2,4) {};
		\node (C2) at (8,2.5) {};
			\node (C21) at (8,1) {};
			\node (C22) at (8,4) {};
		\node (C3) at (5,0) {};
		\node (C4) at (5,5) {};	
		\node (Vp) at (1.375,2.5) {};
		\node (P11) at (1.88,1.2) {};
		\node (P12) at (1.88,3.8) {};		
		\node (P11) at (1.88,1.2) {};
		\node (P12) at (1.88,3.8) {};
		\node (P21) at (7.91,3.7) {};
		\node (P22) at (7.91,1.3) {};		
		\node (Pi1) at (0,3.5) {};
		\node (Pi2) at (0,1.5) {};					

		\draw (0,0) -- (2,1);
		\draw (0,5) -- (2,4);
		\draw (0,-0.5) -- (2,0.5);
		\draw (0,0.5) -- (2,1.5);
		\draw (0,4.5) -- (2,3.5);
		\draw (0,5.5) -- (2,4.5);
		\draw[thick,cap=round,line width=1.5pt] (0,4.75) -- (1.88,3.8);
		
		\draw[thick,color=white,line width=3pt,xscale=0.25] (C1) circle (2.5);
		\draw[white,fill=white] (1.9,0.9) circle (0.07);
		\draw[white,fill=white] (1.9,4.1) circle (0.07);
		\draw[thick,color=white,line width=3pt] (Vp.center) -- (P11);
		\draw[thick,color=white,line width=3pt] (Vp.center) -- (P12);			
		\draw[xscale=0.25] (C1) circle (2.5);
		
		\node at (7,5.7) {$\mathcal{S}_{P'}$};
		\draw[thick,color=white,line width=3pt] (2,1) -- (8,4);
		\draw[thick,color=white,line width=3pt] (2,4) -- (8,1);
		\draw (2,1) -- (8,4);
		\draw (2,4) -- (8,1);
		\draw[xscale=0.25] (C11) circle (0.5);						
		
		\draw[thick,color=white,line width=3pt] (P11) to[out=-43,in=-122] (P21);
		\draw (P11.center) to[out=-45,in=-120] (P21.center);
		
		\draw[thick,color=white,line width=3pt] (P12.center) to[out=-90,in=-140] (P22.center);
		\draw[thick,line width=1.5pt] (P12.center) to[out=-90,in=-140] (P22.center);
		
		\draw[xscale=0.25] (C12) circle (0.5);						
		
		\draw[thick,color=white,line width=3pt,xscale=0.25] (C2) circle (2.5);	
		\draw[white,fill=white] (7.88,1.05) circle (0.07);
		\draw[white,fill=white] (7.88,3.95) circle (0.07);
		\draw[xscale=0.25] (C2) circle (2.5);			
		\draw[xscale=0.25] (C21) circle (0.5);
		\draw[xscale=0.25] (C22) circle (0.5);
					
		\draw[thick,color=white,line width=3pt] (8,4) -- (10,5);														
		\draw[thick,color=white,line width=3pt] (8,1) -- (10,0);
		\node (A1) at (8,0.5) {};
		\node (A2) at (8,1.5) {};
		\node (A3) at (8,3.5) {};
		\node (A4) at (8,4.5) {};
		\draw[thick,color=white,line width=3pt] (A1) -- (10,-0.5);
		\draw[thick,color=white,line width=3pt] (A2) -- (10,0.5);
		\draw[thick,color=white,line width=3pt] (A3) -- (10,4.5);
		\draw[thick,color=white,line width=3pt] (A4) -- (10,5.5);			
		\draw[thick,color=white,line width=3pt,cap=round] (10,4.75) -- (P21);
		\draw[thick,color=white,line width=3pt,cap=round] (10, 0.25) -- (P22);						
		\draw (8,4) -- (10,5);														
		\draw (8,1) -- (10,0);
		\draw (8,0.5) -- (10,-0.5);
		\draw (8,1.5) -- (10,0.5);
		\draw (8,3.5) -- (10,4.5);
		\draw (8,4.5) -- (10,5.5);									
		\draw[cap=round] (10,4.75) -- (P21.center);
		\draw[cap=round,thick,line width=1.5pt] (10, 0.25) -- (P22.center);

		\draw[yscale=0.15] (C3) circle (3);
		\draw[yscale=0.15] (C4) circle (3);	
		
		\node at (P22) {$\bullet$};			
		\node at (P12) {$\bullet$};				
		\node (P) at (5,2.5) [above] {$P'$};			
		\node at (0,0) [left] {$L_t$};												
		\node at (0,5) [left] {$L_s$};	
									
	\end{scope}		
\end{tikzpicture}
		\label{BoundaryManifoldCross:A}
	}
	\subfigure[Near $P=L_s \cap L_t$]{
		\begin{tikzpicture}
	\begin{scope}[shift={(4,0)}, scale=0.6]
	
		\node (C1) at (2,2.5) {};
			\node (C11) at (2,1) {};
			\node (C12) at (2,4) {};
		\node (C2) at (8,2.5) {};
			\node (C21) at (8,1) {};
			\node (C22) at (8,4) {};
		\node (C3) at (5,0) {};
		\node (C4) at (5,5) {};	
		\node (Vp) at (1.375,2.5) {};
		\node (P11) at (1.88,1.2) {};
		\node (P12) at (1.88,3.8) {};		
		\node (P11) at (1.88,1.2) {};
		\node (P12) at (1.88,3.8) {};
		\node (P21) at (7.91,3.7) {};
		\node (P22) at (7.91,1.3) {};		
		\node (Pi1) at (0,3.5) {};
		\node (Pi2) at (0,1.5) {};					

		\draw (0,0) -- (2,1);
		\draw (0,5) -- (2,4);
		\draw (0,-0.5) -- (2,0.5);
		\draw (0,0.5) -- (2,1.5);
		\draw (0,4.5) -- (2,3.5);
		\draw (0,5.5) -- (2,4.5);
		\draw[thick,cap=round,line width=1.5pt] (0,0.25) -- (1.88,1.2);
		\draw[thick,cap=round,line width=1.5pt] (0,4.75) -- (1.88,3.8);
		
		\draw[thick,color=white,line width=3pt,xscale=0.25] (C1) circle (2.5);
		\draw[white,fill=white] (1.9,0.9) circle (0.07);
		\draw[white,fill=white] (1.9,4.1) circle (0.07);
		\draw[thick,color=white,line width=3pt] (Vp.center) -- (P11);
		\draw[thick,color=white,line width=3pt] (Vp.center) -- (P12);			
		\draw[xscale=0.25] (C1) circle (2.5);
		
		\node at (7,5.7) {$\mathcal{S}_{P}$};
		\draw[thick,color=white,line width=3pt] (2,1) -- (8,4);
		\draw[thick,color=white,line width=3pt] (2,4) -- (8,1);
		\draw (2,1) -- (8,4);
		\draw (2,4) -- (8,1);
		\draw[xscale=0.25] (C11) circle (0.5);						
		
		\draw[thick,color=white,line width=3pt] (P11) to[out=-43,in=-122] (P21);
		\draw (P11.center) to[out=-45,in=-120] (P21.center);
		
		\draw[thick,color=white,line width=3pt] (P12.center) to[out=-90,in=-140] (P22.center);
		\draw (P12.center) to[out=-90,in=-140] (P22.center);
		
		\draw[xscale=0.25] (C12) circle (0.5);						
		\draw[thick,line width=1.5pt] (Vp.center) -- (1.88,3.8);	
		\draw[thick,line width=1.5pt] (Vp.center) -- (1.88,1.2);
		
		\draw[thick,color=white,line width=3pt,xscale=0.25] (C2) circle (2.5);	
		\draw[white,fill=white] (7.88,1.05) circle (0.07);
		\draw[white,fill=white] (7.88,3.95) circle (0.07);
		\draw[xscale=0.25] (C2) circle (2.5);			
		\draw[xscale=0.25] (C21) circle (0.5);
		\draw[xscale=0.25] (C22) circle (0.5);
					
		\draw[thick,color=white,line width=3pt] (8,4) -- (10,5);														
		\draw[thick,color=white,line width=3pt] (8,1) -- (10,0);
		\node (A1) at (8,0.5) {};
		\node (A2) at (8,1.5) {};
		\node (A3) at (8,3.5) {};
		\node (A4) at (8,4.5) {};
		\draw[thick,color=white,line width=3pt] (A1) -- (10,-0.5);
		\draw[thick,color=white,line width=3pt] (A2) -- (10,0.5);
		\draw[thick,color=white,line width=3pt] (A3) -- (10,4.5);
		\draw[thick,color=white,line width=3pt] (A4) -- (10,5.5);			
		\draw[thick,color=white,line width=3pt,cap=round] (10,4.75) -- (P21);
		\draw[thick,color=white,line width=3pt,cap=round] (10, 0.25) -- (P22);						
		\draw (8,4) -- (10,5);														
		\draw (8,1) -- (10,0);
		\draw (8,0.5) -- (10,-0.5);
		\draw (8,1.5) -- (10,0.5);
		\draw (8,3.5) -- (10,4.5);
		\draw (8,4.5) -- (10,5.5);									
		\draw[cap=round] (10,4.75) -- (P21.center);
		\draw[cap=round] (10, 0.25) -- (P22.center);

		\draw[yscale=0.15] (C3) circle (3);
		\draw[yscale=0.15] (C4) circle (3);	
		
		\node at (P12) {$\bullet$};				
		\node at (P11) {$\bullet$};
		\node at (Vp) {$\bullet$};	
		\node (P) at (5,2.5) [above] {$P_{s,t}$};			
		\node at (0,0) [left] {$L_t$};												
		\node at (0,5) [left] {$L_s$};	
									
	\end{scope}		
\end{tikzpicture}
		\label{BoundaryManifoldCross:B}
	}
	\caption{Construction of $\epsilon_{s,t}$ near singular points	}
	\label{BoundaryManifoldCross}
\end{figure}
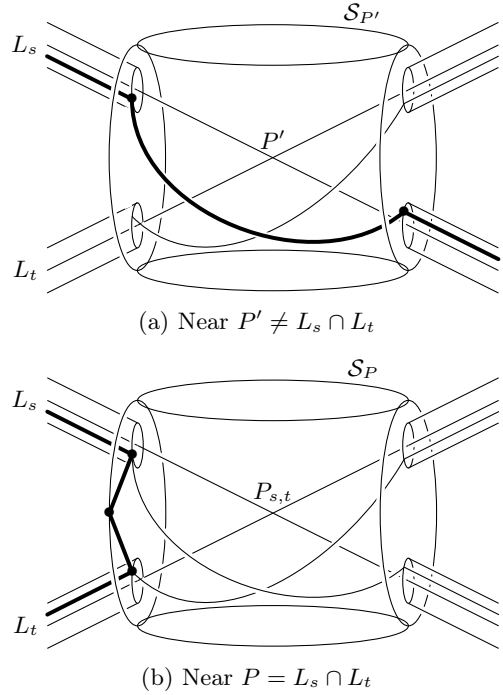

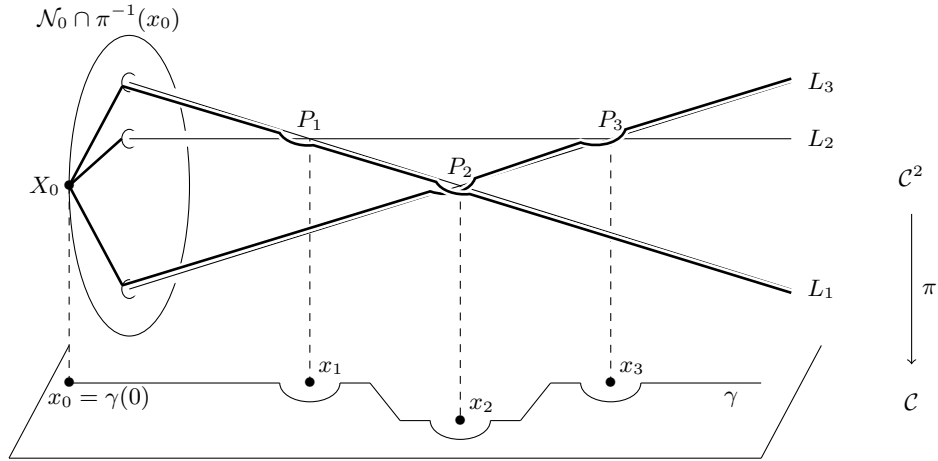
\begin{figure}[!ht]
	\begin{tikzpicture}		
	\begin{scope}[yscale=0.5,xscale=0.8]
		\draw (0,0)--(12.5,0);
		\draw (0,0)--(1,3);
		\draw (12.5,0)--(13.5,3);
		
		\node (Q1) at (5,2) {$\bullet$};
			\node[above right] at (Q1) {$x_1$};
		\node (Q2) at (7.5,1) {$\bullet$};
			\node[above right] at (Q2) {$x_2$};
		\node (Q3) at (10,2) {$\bullet$};
			\node[above right] at (Q3) {$x_3$};
		
		\node (x0) at (1,2) {$\bullet$};
			\node at (1.5,1.6) {$x_0=\gamma(0)$};
		\draw (1,2)--(4.5,2);
		\draw (4.5,2) arc (180:360:0.5) ;
		\draw (5.5,2)--(6,2);
		\draw (6,2)--(6.5,1);
		\draw (6.5,1)--(7,1);
		\draw (7,1) arc (180:360:0.5) ;
		\draw (8,1)--(8.5,1);
		\draw (8.5,1)--(9,2);
		\draw (9,2)--(9.5,2);
		\draw (9.5,2) arc (180:360:0.5) ;
		\draw (10.5,2)--(12.5,2);
		\node[below] at (12,2) {$\gamma$};
		
		\node at (3,11.7) [left] {$\mathcal{N}_0\cap \pi\inv(x_0)$};
		\node (X0) at (1,7.25) {$\bullet$};
			\node[left] at (X0) {$X_0$};
		\node (C) at (2,7.25) {};	
		\node (C1) at (2,10) {};	
		\node (C2) at (2,8.5) {};	
		\node (C3) at (2,4.5) {};		
		\draw[xscale=0.25] (C) circle (4);			
		\draw[xscale=0.25,yscale=0.5] (C1) circle (0.5);		
		\draw[xscale=0.25,yscale=0.5] (C2) circle (0.5);
		\draw[xscale=0.25,yscale=0.5] (C3) circle (0.5);
					
		\node (G1) at (1.885,9.9) {};
		\node (G2) at (1.875,8.5) {};
		\node (G3) at (1.885,4.6) {};
		\draw[thick,line width=1pt] (X0.center)--(G1.center) 
		;
		\draw[thick,line width=1pt] (X0.center)--(G2.center) 
		;
		\draw[thick,line width=1pt] (X0.center)--(G3.center) 
		;
		
		\draw[thick,color=white,line width=6pt] (2,10)--(13,4.5);
		\draw[thick,color=white,line width=6pt] (2,8.5)--(13,8.5);
		\draw[thick,color=white,line width=6pt] (2,4.5)--(13,10);
		
		\draw (2,10)--(13,4.5);
			\node at (13.5,4.5) {$L_1$};
		\draw (2,8.5)--(13,8.5);
			\node at (13.5,8.5) {$L_2$};
		\draw (2,4.5)--(13,10);
			\node at (13.5,10) {$L_3$};
		\node (P1) at (5,8.5) [above] {$P_1$};
		\node (P2) at (7.5,7.25) [above] {$P_2$};
		\node (P3) at (10,8.5) [above] {$P_3$};
					
		\node at (15,7.5) {$\C^2$};
		\draw[->] (15,6.5)--(15,2.5);		
		\node at (15,1.5) {$\C$};	
		\node at (15.3,4.5) {$\pi$};
		
		\draw[dashed] (P1)--(5,2);
		\draw[dashed] (P2)--(7.5,1);
		\draw[dashed] (P3)--(10,2);
		\draw[dashed] (X0.center)--(x0.center);			
		
		\draw[thick,color=white,line width=3pt] (G3) -- (7,7.1) arc (240:350:0.5) -- (9.5,8.4) arc (240:350:0.5) -- (13,10.1);
		\draw[thick,line width=1pt] (G3.center) -- (7,7.1) arc (240:350:0.5) -- (9.5,8.4) arc (240:350:0.5) -- (13,10.1) 
		;	
		\draw[thick,color=white,line width=3pt] (G1) -- (4.5,8.65) arc (200:280:0.5) -- (7.1,7.35) arc (200:280:0.5) -- (13,4.4);
		\draw[thick,line width=1pt] (G1.center) -- (4.5,8.65) arc (200:280:0.5) -- (7.1,7.35) arc (200:280:0.5) -- (13,4.4) 
		;		
		
	
	\end{scope}
\end{tikzpicture}	
	\caption{Construction of $\delta(\epsilon)$ }
	\label{BoundaryManifoldGlobal} 
\end{figure}

In order to compute the images of the framed cycles in the complement $E_\A$, we introduce \emph{geometric cycles} $\mathcal{E}_{s,t}$, defined as parallel copies of the  $\xi_{s,t}$'s. Indeed, let $\mathcal{E}_{s,t}$ in $\pi_1(B_\A)$ be the image of $\xi_{s,t}$ by the projection in the direction $[0:i:0]$. Remark that the difference between $\epsilon_{s,t}$ and $\mathcal{E}_{s,t}$ is local and takes place near the singular points. We also define the \emph{unknotting map} by:
\begin{equation*}	
	\delta :  \left\{
		\begin{array}{ccc}
			\p(B_\A, X_0) & \longrightarrow & \p(B_\A, X_0) \\
			\alpha_i & \longmapsto & \alpha_i \\
			\epsilon_{s,t} & \longmapsto & \mathcal{E}_{s,t}
		\end{array}
		\right..
\end{equation*}

Let us define $\delta_{s,t}^l$ (resp. $\delta_{s,t}^r$)  as the products over all actual crossings $P$ of the arc $L_s$ (resp. $L_t$) of $\xi_{s,t}$, different from $L_s \cap L_t$, of the following words:

Suppose that $P= L_{i_1} \cap \dots \cap L_{i_m}$ where the order of the lines corresponds to Figure \ref{Delta}.
\begin{itemize}
	\item[-] If $P\in L_s$, let $h\in\left\{1,\cdots,m\right\}$ be such that $i_h=s$, then $P$ contributes to $\delta^l_{s,t}$ by: 
		\begin{equation*}
			\epsilon_{i_1,i_h}\inv \left( \alpha_{i_1}\inv \left( \epsilon_{i_1,i_{2}} \alpha_{i_{2}}\inv \epsilon_{i_1,i_{2}}\inv \right) \cdots \left( \epsilon_{i_1,i_{h-1}} \alpha_{i_{h-1}}\inv \epsilon_{i_1,i_{h-1}}\inv \right) \right) \epsilon_{i_1,i_h},
		\end{equation*}
	\item[-] If $P\in L_t$, let $h\in\left\{1,\cdots,m\right\}$ be such that $i_h=t$, then $P$ contributes to $\delta^r_{s,t}$ by: 
		\begin{equation*}
			\epsilon_{i_1,i_h}\inv \left(\left( \epsilon_{i_1,i_{h-1}} \alpha_{i_{h-1}} \epsilon_{i_1,i_{h-1}}\inv \right) \cdots \left( \epsilon_{i_1,i_{2}} \alpha_{i_{2}} \epsilon_{i_1,i_{2}}\inv \right) \alpha_{i_1} \right) \epsilon_{i_1,i_h}.
		\end{equation*}
\end{itemize}
	 
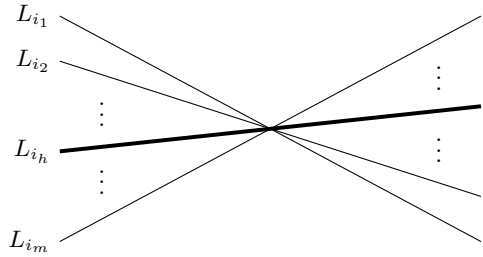
\begin{figure}[!ht]
	\begin{tikzpicture}
	\begin{scope}[shift={(0,0)},xscale=2.8,yscale=0.6]
		\draw (-1,3)--(1,-2);
		\draw (-1,2)--(1,-1);
		\draw[thick,line width=1.5pt] (-1,0)--(1,1);
		\draw (-1,-2)--(1,3);
		\node at (-0.8,1) {$\vdots$};
		\node at (-0.8,-0.5) {$\vdots$};
		\node at (0.8,1.8) {$\vdots$};
		\node at (0.8,0.2) {$\vdots$};
		\node[left] at (-1,3) {$L_{i_1}$};
		\node[left] at (-1,2) {$L_{i_2}$};
		\node[left] at (-1,0) {$L_{i_h}$};
		\node[left] at (-1,-2) {$L_{i_m}$};
	\end{scope}
\end{tikzpicture}
	\caption{Indexation of a crossing}
	\label{Delta}
\end{figure}

\begin{propo} \label{image}
	The image of a framed cycle by the unknotting map $\delta$ is:
	\begin{equation*}
	 \delta(\epsilon_{s,t})  = \mathcal{E}_{s,t}= \delta_{s,t}^l \ \epsilon_{s,t} \ \delta_{s,t}^r.
	\end{equation*}
\end{propo}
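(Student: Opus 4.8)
The strategy is to track what happens to the framed cycle $\epsilon_{s,t}$ as it passes each actual crossing, comparing it to the geometric cycle $\mathcal{E}_{s,t}$, and to show that the discrepancy accumulates exactly into the words $\delta^l_{s,t}$ and $\delta^r_{s,t}$. Recall that $\epsilon_{s,t}$ consists of four arcs: two segments in $\mathcal{N}_0$ (which coincide with the corresponding arcs of $\mathcal{E}_{s,t}$ up to basepoint issues), the arc along $L_s$, and the arc along $L_t$; while $\mathcal{E}_{s,t}$ is the straight parallel copy projected in the direction $[0:i:0]$. So the difference is supported near the actual crossings traversed by the $L_s$-arc and the $L_t$-arc, together with the detour made near $P=L_s\cap L_t$ itself. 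The plan is therefore local: fix an actual crossing $P'=L_{i_1}\cap\dots\cap L_{i_m}$ (with the indexing of Figure~\ref{Delta}) on, say, the $L_s$-arc, and compute in $\pi_1(B_\A,X_0)$ the based loop obtained by going out along $\mathcal{N}_0$, along $\mathcal{E}_{s,t}$ up to $P'$, then making the half-circle detour of $\epsilon_{s,t}$ inside $\mathcal{S}_{P'}$ (Figure~\ref{BoundaryManifoldCross:A}), then back. The claim is that this based loop equals (the conjugate of) the prescribed word
\begin{equation*}
	\epsilon_{i_1,i_h}\inv \left( \alpha_{i_1}\inv \left( \epsilon_{i_1,i_{2}} \alpha_{i_{2}}\inv \epsilon_{i_1,i_{2}}\inv \right) \cdots \left( \epsilon_{i_1,i_{h-1}} \alpha_{i_{h-1}}\inv \epsilon_{i_1,i_{h-1}}\inv \right) \right) \epsilon_{i_1,i_h},
\end{equation*}
where $h$ is the position of $s$ among $i_1,\dots,i_m$.

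First I would set up the local picture at $P'$ using the description from Subsection~\ref{BoundaryManifoldConstruction}: the half-circle detour lives in the piece $\mathcal{N}_s\cap\mathcal{S}_{P'}\subset B_\A$, and I would express it in terms of the generators $y_{P',i_j}$ of $\pi_1(\mathcal{S}_{P'})$ together with the longitude. By Definition~\ref{Def_MerLongTl} and the presentation of $\pi_1(\mathcal{S}_{P'})$ from the proof of Proposition~\ref{BoundaryPresentation}, the half-circle around $x=\pi(P')$ in the positive sense, read in $T_{L_s}=T_s$, is exactly a ``partial longitude'' — the product $y_{P',i_1}\cdots y_{P',i_{h-1}}$ of the local meridians of the strands lying \emph{above} $L_s$ at that crossing (those are the strands the projected arc of $\mathcal{E}_{s,t}$ would have crossed over, and which $\epsilon_{s,t}$ instead goes around). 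The sign conventions (positive sense of the half-circle, the orientation of the $[0:i:0]$ projection, top-to-bottom ordering in Figure~\ref{Delta}) determine that these appear with \emph{inverse} exponents on the $L_s$-side and \emph{positive} exponents on the $L_t$-side — this is the asymmetry between the two displayed formulas. Then I would transport this local word to the basepoint $X_0$: the path from $X_0$ out to the crossing $P'$ runs first along $\mathcal{N}_0$ and then along the wire of $L_s$, crossing the actual crossings $P''$ strictly between $x_0$ and $\pi(P')$; passing such a crossing conjugates the local meridian $y_{P',i_j}$ precisely by the cycle $\epsilon_{i_1,i_j}$ (this is exactly the content of how the gluing over the edge $e(L_{i_j},P'')$ introduces the generator $\mathfrak{e}=\epsilon$ in Proposition~\ref{BoundaryPresentation}), and by $\alpha_{i_j}$ when the strand was already present at the previous crossings. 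Assembling these conjugations in the order dictated by the path gives the parenthesized product above, and the outermost $\epsilon_{i_1,i_h}^{\pm1}$ comes from the conjugation by the cycle that carries the basepoint across the crossing $P'$ along the $L_s$-wire itself.

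Next I would handle the special point $P=L_s\cap L_t$: here both arcs of $\epsilon_{s,t}$ pass through $\mathcal{S}_P$, and the detour described around Figure~\ref{BoundaryManifoldCross:B} (joining the two endpoints via the point of $\pi\inv(x_P-\eta_P)\cap\mathcal{S}_P$ of smallest real part) is \emph{exactly} the choice that realizes $\epsilon_{s,t}$ as the based cycle $\mathfrak{e}_{s,t}$ from Proposition~\ref{BoundaryPresentation}; so by construction this crossing contributes the generator $\epsilon_{s,t}$ itself and nothing else, which is why $L_s\cap L_t$ is explicitly excluded from the products defining $\delta^l_{s,t}$ and $\delta^r_{s,t}$. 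Finally I would concatenate: reading the loop $\mathcal{E}_{s,t}=\delta(\epsilon_{s,t})$ from $X_0$, along $\mathcal{N}_0$, up the $L_s$-arc, around $P$, down the $L_t$-arc, back along $\mathcal{N}_0$, the local contributions appear in exactly the order $\delta^l_{s,t}$ (all $L_s$-crossings, in increasing real part), then $\epsilon_{s,t}$ (the crossing $P$), then $\delta^r_{s,t}$ (all $L_t$-crossings), giving $\delta(\epsilon_{s,t})=\delta^l_{s,t}\,\epsilon_{s,t}\,\delta^r_{s,t}$.

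\textbf{Main obstacle.} The hard part is bookkeeping the conjugations correctly — getting the right generator ($\epsilon$ vs.\ $\alpha$) and the right exponent sign at each strand of each intermediate crossing, and confirming that the products are ordered (innermost/outermost, left/right) consistently with the orientation conventions for the meridians, longitudes, the $[0:i:0]$ projection, and the re-indexing $I_i<I_j\Leftrightarrow i<j$. In particular one must check that the ``partial longitude'' read off at $P'$ really truncates at position $h-1$ (not $h$ or $m$), i.e.\ that $\epsilon_{s,t}$ goes \emph{around} precisely the overgoing strands and not the undergoing ones — this is where Figures~\ref{BoundaryManifoldCross:A} and~\ref{Delta} have to be compared carefully. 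Everything else is a routine Seifert–van Kampen / HNN computation of the kind already carried out in Proposition~\ref{BoundaryPresentation}.
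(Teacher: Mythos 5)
Your plan is essentially the paper's own argument, spelled out in more detail: the paper's proof likewise reduces everything to the observation that at each actual crossing the discrepancy between the framed cycle and the geometric cycle is the half-circle (half-twist) contribution of $\gamma$ around $x=\pi(P)$, and that these local words, transported to the basepoint, assemble in order into $\delta^l_{s,t}$ and $\delta^r_{s,t}$. Your additional bookkeeping (local partial longitude at each crossing, conjugation by the $\epsilon_{i_1,i_j}$ from the gluings, and the special treatment of $P=L_s\cap L_t$ via $\mathfrak{e}_{s,t}=\epsilon_{s,t}$) is consistent with the conventions in Sections 2 and 4, so this is the same proof, just less terse.
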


\begin{proof*}
The contribution of $P$ is induced by the action of a half-twist, given by the pre-image by $\gamma$ of the half circle 
around each $x=\pi(P)$, in the positive sense. We obtain the description of $\delta_{s,t}^l$ and $\delta_{s,t}^r$ above, and then $\epsilon_{s,t}= \left(\delta_{s,t}^l\right)^{-1} \mathcal{E}_{s,t} \left(\delta_{s,t}^r\right)^{-1}$. 
\end{proof*}

\begin{ex}
	The images of the $\epsilon_{s,t}$ of the didactic example by the unknotting map are:
	\begin{equation*}
		\begin{array}{c}
			\delta(\epsilon_{1,2})=\epsilon_{1,2}, \qquad
			\delta(\epsilon_{1,4})=\epsilon_{1,4}, \qquad
			\delta(\epsilon_{1,3})=\epsilon_{1,3},\\[7pt]
			\delta(\epsilon_{2,3}) = (\epsilon_{1,2}\inv\alpha_1\inv\epsilon_{1,2}) \epsilon_{2,3} (\epsilon_{1,3}\inv\alpha_1\epsilon_{1,3}), \\[7pt]
			\delta(\epsilon_{3,4}) =  (\epsilon_{1,3}\inv\alpha_1\inv\epsilon_{1,3}) \epsilon_{3,4} ( \epsilon_{1,4}\inv(\alpha_1\epsilon_{1,2}\alpha_2\epsilon_{1,2}\inv)\epsilon_{1,4}). \\
		\end{array}
	\end{equation*}
\end{ex}

\subsection{Inclusion map} \label{inclusion}

Geometric cycles were constructed by taking parallel copies of cycles of $\Gamma_{\A}$, via $W_{\A}$, to the boundary manifold $B_\A$.
 Their image in $E_\A$ can then be computed directly from $W_\A$.

Let $\xi_{s,t}$ be a cycle of $W_{\A}$, relative to the left endpoints. An \emph{over arc} $\varsigma$ is an arc of $W_\A$ that goes over $\xi_{s,t}$ through a virtual crossing.  Denote $\sgn(\varsigma)\in\left\{\pm 1 \right\}$ the sign of the crossing. It is positive if the orientations of $\varsigma$ and $\xi_{s,t}$ (in this order) at the crossing form a positive base, and is negative otherwise.

	Let $S_{\xi_{s,t}}$ be the set of over arcs of $\xi_{s,t}$ --oriented from left to right--. The element $\mu_{s,t}$ is defined by:
\begin{equation*}
	\mu_{s,t}=\prod\limits_{\varsigma\in S_{\xi_{s,t}}} a_\varsigma^{\sgn(\varsigma)}, 
\end{equation*}
	where $a_\varsigma$ is the word associate to the arc $\varsigma$ by the Arvola's algorithm (see Subsection~\ref{color}), and the order in the product respects the order of the virtual crossings in the cycle $\xi_{s,t}$. Note that $\mu_{s,t}$ is a product of conjugates of meridians.

\begin{thm}\label{MainResult}
	For $i=0,\dots,n$, let $\alpha_i$ be the meridians of the lines and let $\left\{\epsilon_{s,t}\right\}$ be a set of cycles indexed by a generating system $\mathscr{E}$ of cycles of  the incidence graph $\Gamma_{\A}$. Then the fundamental group of $B_\A$ is generated by $\left\{ \alpha_0, \dots, \alpha_n, \epsilon_{s_1,t_1},\dots,\epsilon_{s_l,t_l} \right\}$, and the map $i_*:\p(B_\A)\rightarrow \p(E_\A)$ induced by the inclusion is described as follows :
	\begin{equation*} i_* : 
	\left\{
		\begin{array}{ccl}
			\alpha_i & \longmapsto & \alpha_i, \\			
			\epsilon_{s,t} & \longmapsto & \left(\delta^l_{s,t}\right)\inv \mu_{s,t} \left(\delta^r_{s,t}\right)\inv,
		\end{array}
	\right. 
	\end{equation*}
\end{thm}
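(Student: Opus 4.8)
The plan is to decompose the inclusion map into the composition of the unknotting map $\delta$ from Proposition~\ref{image} and the computation of the image of a \emph{geometric} cycle $\mathcal{E}_{s,t}$ in $\p(E_\A)$. On generators $\alpha_i$ there is nothing to prove: a meridian of $L_i$ living in the boundary of a regular neighborhood of $L_0$ is sent to itself by the inclusion, which is exactly the identification already made in the presentations of Proposition~\ref{BoundaryPresentation} and Theorem~\ref{ThmArvola}. So the content is the formula $i_*(\epsilon_{s,t}) = (\delta^l_{s,t})^{-1}\,\mu_{s,t}\,(\delta^r_{s,t})^{-1}$. First I would invoke Proposition~\ref{image} to write $\epsilon_{s,t} = (\delta^l_{s,t})^{-1}\,\delta(\epsilon_{s,t})\,(\delta^r_{s,t})^{-1}$ in $\p(B_\A)$, where $\delta(\epsilon_{s,t}) = \mathcal{E}_{s,t}$; since the $\delta^{l}_{s,t}$, $\delta^r_{s,t}$ are words in the $\alpha_i$ and $\epsilon_{s,t}$ whose images are already determined (and the factors of $\epsilon$-type appearing in them cancel appropriately, as they are conjugated trivially once pushed into $E_\A$ — this is the point where the ``$\mu$-only'' shape of the answer emerges), it suffices to compute $i_*(\mathcal{E}_{s,t})$ and show it equals $\mu_{s,t}$.

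For that, I would work directly in $W_\A$. The geometric cycle $\mathcal{E}_{s,t}$ is, by construction, a parallel copy of the cycle $\xi_{s,t}$ of the incidence graph, realized as a loop in $W_\A$ pushed off in the direction $[0:i:0]$; in $E_\A$ it is freely homotopic to a based loop that travels along the wires $\omega_s$ and $\omega_t$ (and the two segments in $\mathcal{N}_0$) slightly above the diagram. The key step is a local analysis at each crossing the cycle passes under: at an \emph{actual} crossing the cycle stays inside $\mathcal{N}_s\cap\mathcal{S}_P$ (or $\mathcal{N}_t\cap\mathcal{S}_P$) and contributes nothing new to its class in $\p(E_\A)$ beyond what is already recorded, whereas at each \emph{virtual} crossing where an over arc $\varsigma$ passes over the cycle, the cycle acquires, by the standard Wirtinger/Arvola bookkeeping, a factor $a_\varsigma^{\sgn(\varsigma)}$, with $a_\varsigma$ the Arvola word on $\varsigma$ at that point. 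Concatenating these local contributions in the order dictated by the $x$-coordinate along $\gamma$ gives precisely $\mu_{s,t} = \prod_{\varsigma\in S_{\xi_{s,t}}} a_\varsigma^{\sgn(\varsigma)}$. The sign convention for $\sgn(\varsigma)$ — positive when the frame $(\varsigma, \xi_{s,t})$ is positively oriented at the crossing — is exactly what makes the conjugating meridian appear with the correct exponent, matching the rules (B) and (C) of Figure~\ref{Arvola}.

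The main obstacle is the careful matching of conventions across the two models: the base point $X_0\in\mathcal{N}_0$ versus the Arvola base point at the left of the diagram, the orientation of the meridians $\alpha_i$ as they are used in $\p(B_\A)$ versus $\p(E_\A)$, and above all verifying that the contributions of the actual crossings really do organize themselves into the $\delta^l_{s,t}$ and $\delta^r_{s,t}$ of Proposition~\ref{image} so that, upon applying $i_*$ to the expression $\epsilon_{s,t} = (\delta^l_{s,t})^{-1}\mathcal{E}_{s,t}(\delta^r_{s,t})^{-1}$, no spurious $\epsilon$-factors survive and only $\mu_{s,t}$ is left in the middle. I would handle this by a crossing-by-crossing induction along $\gamma$, tracking the Arvola words on the two strands $\omega_s, \omega_t$ and on all over arcs, and checking the inductive step against each of the three crossing types in Figure~\ref{Arvola}; the didactic example and the MacLane computation of Section~\ref{Inclusion_Map} serve as consistency checks. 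A subtlety worth isolating is the detour around the point $P = L_s\cap L_t$ itself (Figure~\ref{BoundaryManifoldCross:B}): one must confirm that routing the cycle through $\pi^{-1}(x_P-\eta_P)\cap\mathcal{S}_P$ on the side of smallest real part contributes trivially in $\p(E_\A)$, which follows because that arc stays in the solid-torus part $\mathcal{S}_P$ away from the other wires.
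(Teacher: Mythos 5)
Your proposal is correct and follows essentially the paper's own route: it reduces via Proposition~\ref{image} to showing $i_*(\mathcal{E}_{s,t})=\mu_{s,t}$, and then identifies the pushed-off geometric cycle with the ordered product of signed Arvola words of the over arcs --- the paper merely packages your crossing-by-crossing Wirtinger bookkeeping as a single disc with $\mathrm{card}(S_{\xi_{s,t}})$ holes glued along $\mathcal{E}_{s,t}$ and the meridians of the over arcs. The only inaccuracy is your parenthetical claim that the $\epsilon$-factors inside $\delta^l_{s,t},\delta^r_{s,t}$ ``cancel'' in $E_\A$: they do not cancel but remain in the stated formula $\left(\delta^l_{s,t}\right)\inv\mu_{s,t}\left(\delta^r_{s,t}\right)\inv$ and are only eliminated by the recursive substitution noted after the theorem; this does not affect the validity of your reduction.
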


It is worth noticing that using a recursive argument on the set of $\epsilon_{s,t}$, the words $\left(\delta^l_{s,t}\right)\inv \mu_{s,t} \left(\delta^r_{s,t}\right)\inv$ are products of conjugates of the meridians $\alpha_1, \dots, \alpha_n$.

\begin{proof*}
Since $B_\A\subset E_\A$, then a class in $\p(B_\A)$ can be viewed as a class in $\p(E_\A)$, and both are denoted in the same way. 

By Proposition \ref{image}, each class $\delta_{s,t}^l \epsilon_{s,t} \delta_{s,t}^r$ in $\p(B_\A)$ can be represented by a geometric cycle $\mathcal{E}_{s,t}$, obtained as a parallel copy of $\xi_{s,t}$ from $W_\A$ to $B_\A$. Consider a $2$-cell homotopic to a disc with $\hbox{card}(S_{\xi_{s,t}})$ holes. Then glue the boundary of the disc to $\mathcal{E}_{s,t}$ and the other boundary components to the meridians of over arcs $\varsigma\in S_{\xi_{s,t}} $. As the 2-cell is in $E_\A$, then, in the exterior, $\mathcal{E}_{s,t}$ can be retracted to the product $\mu_{s,t}$ of the $a_\varsigma$, with $\varsigma\in S_{\xi_{s,t}}$. It follows that in $\p(E_\A)$, $\delta_{s,t}^l \epsilon_{s,t} \delta_{s,t}^r=\mu_{s,t}$. 
\end{proof*} 

In the construction of the boundary manifold (see Sub-section~\ref{BoundaryManifoldConstruction}), the cycle $\mathfrak{e}_{s,t}$ appearing as cycles in HNN-extension. In the previous theorem, we take $\mathfrak{e}_{s,t}=\epsilon_{s,t}$ as cycles in the generating system of $\p(B_\A)$. Now we consider the geometric cycles defined from the projection of $W_\A$ in $B_\A$ (i.e. the $\delta(\epsilon_{s,t})$, noted $\mathcal{E}_{s,t}$). Their construction allows to consider them as cycles for the HNN-extension too (i.e. $\mathfrak{e}_{s,t}=\mathcal{E}_{s,t}$). With this generating system of $\p(B_\A)$, we obtain a simpler version of Theorem~\ref{MainResult}:

\begin{thm}\label{Simpler_MainResult}
	For $i=0,\dots,n$, let $\alpha_i$ be the meridians of the lines and let $\left\{\mathcal{E}_{s,t}\right\}$  be a set of cycles indexed by a generating system $\mathscr{E}$ of cycles of  the incidence graph $\Gamma_{\A}$. Then the fundamental group of $B_\A$ is generated by $\left\{ \alpha_1, \dots, \alpha_n, \mathcal{E}_{s_1,t_1},\cdots,\mathcal{E}_{s_l,t_l} \right\}$, and the map $i_*:\p(B_\A)\rightarrow \p(E_\A)$ induced by the inclusion is described as follows :
	\begin{equation*} i_* : 
	\left\{
		\begin{array}{ccl}
			\alpha_i & \longmapsto & \alpha_i, \\			
			\mathcal{E}_{s,t} & \longmapsto & \mu_{s,t},
		\end{array}
	\right. 
	\end{equation*}
\end{thm}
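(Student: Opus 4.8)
The plan is to reduce this statement to Theorem~\ref{MainResult}, which has already been established, by tracking the effect of changing the chosen HNN-generators in the presentation of $\p(B_\A)$. Recall from Section~\ref{bound} that the cycles $\epsilon_{s,t}$ (framed cycles) and the geometric cycles $\mathcal{E}_{s,t}=\delta(\epsilon_{s,t})$ are related by Proposition~\ref{image} via $\mathcal{E}_{s,t}=\delta_{s,t}^l\,\epsilon_{s,t}\,\delta_{s,t}^r$, where $\delta_{s,t}^l$ and $\delta_{s,t}^r$ are words in the $\alpha_i$'s and in cycles $\epsilon_{s',t'}$ with $(s',t')$ strictly smaller than $(s,t)$ in the total order used in the proof of Corollary~\ref{MainCor}. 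The first step is to observe that, because the $\mathcal{E}_{s,t}$ are themselves obtained as cycles arising from the gluings in the HNN-extension construction of $B_\A$ (as explained in the paragraph preceding the statement), Proposition~\ref{BoundaryPresentation} applies verbatim with $\mathfrak{e}_{s,t}=\mathcal{E}_{s,t}$; this gives a valid presentation of $\p(B_\A)$ with generators $\{\alpha_i\}\cup\{\mathcal{E}_{s,t}\}$. The change of generators $\epsilon_{s,t}\mapsto\mathcal{E}_{s,t}$ is a genuine change of basis: the relation in Proposition~\ref{image} expresses each $\mathcal{E}_{s,t}$ triangularly in terms of $\{\alpha_i\}$ and the $\epsilon_{s',t'}$ with $(s',t')<(s,t)$, so by the same induction as in Corollary~\ref{MainCor} it is invertible, and $\{\alpha_i\}\cup\{\mathcal{E}_{s,t}\}$ indeed generates $\p(B_\A)$.

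Next, I would compute $i_*(\mathcal{E}_{s,t})$ directly. Since $i_*$ is a homomorphism and $i_*(\alpha_i)=\alpha_i$, Proposition~\ref{image} gives
\begin{equation*}
	i_*(\mathcal{E}_{s,t}) = i_*(\delta_{s,t}^l)\,i_*(\epsilon_{s,t})\,i_*(\delta_{s,t}^r).
\end{equation*}
The words $\delta_{s,t}^l$ and $\delta_{s,t}^r$ only involve $\alpha_i$'s and lower cycles $\epsilon_{s',t'}$; so, reasoning by induction on the total order, one may assume the formula $i_*(\epsilon_{s',t'})=(\delta^l_{s',t'})\inv\mu_{s',t'}(\delta^r_{s',t'})\inv$ of Theorem~\ref{MainResult} for all $(s',t')<(s,t)$, and likewise $i_*(\mathcal{E}_{s',t'})=\mu_{s',t'}$. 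But in fact no induction is strictly needed: applying $i_*$ to both sides of the identity $\mathcal{E}_{s,t}=\delta^l_{s,t}\,\epsilon_{s,t}\,\delta^r_{s,t}$ and using Theorem~\ref{MainResult}'s value $i_*(\epsilon_{s,t})=(\delta^l_{s,t})\inv\mu_{s,t}(\delta^r_{s,t})\inv$ together with the fact that $i_*$ sends $\delta^l_{s,t}$ (resp. $\delta^r_{s,t}$), a word built from $\alpha_i$'s and $\epsilon$'s, to its image—note that $i_*(\delta^l_{s,t})$ is obtained from $\delta^l_{s,t}$ by the substitutions $\alpha_i\mapsto\alpha_i$ and $\epsilon_{s',t'}\mapsto i_*(\epsilon_{s',t'})$—one must check that these images telescope correctly so that
\begin{equation*}
	i_*(\mathcal{E}_{s,t}) = i_*(\delta^l_{s,t})\,(\delta^l_{s,t})\inv\,\mu_{s,t}\,(\delta^r_{s,t})\inv\,i_*(\delta^r_{s,t}) = \mu_{s,t}.
\end{equation*}
The cancellation $i_*(\delta^l_{s,t})(\delta^l_{s,t})\inv=1$ in $\p(E_\A)$ is exactly the content of the relations $r_{s',t'}$ for $(s',t')<(s,t)$ appearing in the proof of Corollary~\ref{MainCor}, i.e. the statement that $i_*(\epsilon_{s',t'})$ equals the image under $i_*$ of $\epsilon_{s',t'}$—a tautology once one recalls that $i_*$ is well-defined on $\p(B_\A)$ and that $\delta^l_{s,t}$ is a word in the generators $\alpha_i,\epsilon_{s',t'}$.

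The cleanest way to organize this is therefore: (i) invoke Proposition~\ref{BoundaryPresentation} with $\mathfrak{e}_{s,t}=\mathcal{E}_{s,t}$ to get the new presentation and generating set, justified by the fact that the $\mathcal{E}_{s,t}$ are admissible HNN-cycles; (ii) observe that $i_*$ is the same homomorphism regardless of which generating set we name, since it is induced by the topological inclusion $B_\A\hookrightarrow E_\A$; (iii) compute $i_*(\mathcal{E}_{s,t})$ by applying $i_*$ to $\mathcal{E}_{s,t}=\delta^l_{s,t}\epsilon_{s,t}\delta^r_{s,t}$ and substituting the formula of Theorem~\ref{MainResult}, obtaining $i_*(\mathcal{E}_{s,t})=\mu_{s,t}$ after the telescoping cancellation above. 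The main obstacle is step~(i): one must be confident that the geometric cycles $\mathcal{E}_{s,t}$ genuinely arise from the gluing data of the HNN-extensions in the construction of $B_\A$—equivalently, that each $\mathcal{E}_{s,t}$ is a non-trivial cycle supported on the union $\bigcup_{v_P\in\xi_{s,t}}\mathcal{S}_P\cup\bigcup_{v_L\in\xi_{s,t}}\N_L$ and maps to a generator of the corresponding stable letter—and that the disjointness hypothesis $\mathcal{E}_{s,t}\cap\mathcal{E}_{s',t'}=X_0$ can be arranged. This is where one should refer back to the explicit construction of the geometric cycles as parallel copies of the $\xi_{s,t}$ via $W_\A$ in Section~\ref{bound}, noting that these parallel copies can be taken mutually disjoint away from $X_0$ and that each is visibly a non-trivial loop in the relevant plumbing piece; once this is in place, the algebraic part is the routine telescoping displayed above.
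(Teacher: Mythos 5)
Your proposal is correct and takes essentially the same route as the paper: the paper justifies Theorem~\ref{Simpler_MainResult} precisely by noting that the geometric cycles $\mathcal{E}_{s,t}$ may be used as the HNN-cycles $\mathfrak{e}_{s,t}$ in Proposition~\ref{BoundaryPresentation}, after which $i_*(\mathcal{E}_{s,t})=\mu_{s,t}$ follows from Proposition~\ref{image} combined with (the proof of) Theorem~\ref{MainResult}, which is exactly your telescoping computation.
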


\subsection{Exact sequence} \label{sequ}

\begin{thm} \label{sequence}
The following sequence is exact
\begin{equation*}
	0 \longrightarrow K \stackrel{\phi}{\longrightarrow} \p(B_\A) \stackrel{i_*}{\longrightarrow} \p(E_\A) \longrightarrow 0,
\end{equation*}
where $K$ is the normal subgroup of $\p(B_\A)$ generated by all the elements of the form $\delta_{s,t}^l \epsilon_{s,t} \delta_{s,t}^r \mu_{s,t}\inv$, and the product $\alpha_0\cdots\alpha_n$.
\end{thm}

\begin{proof*}
By Theorem \ref{MainResult}, the map $i_*$ is onto and $K$ is included in $\ker(i_*)$.
It remains to show that the relations induced by the images $i_*(\epsilon_{s,t})$  are enough to determine
a presentation of $\pi_1(E_\A)$. We compare these relations to those coming from braid monodromy 
 and Zariski-Van Kampen's method, see~\cite{li:86} for example.
 
Let $P=L_{i_1}\cap\cdots\cap L_{i_m}$ (as in Figure~\ref{Delta}), be a singular point of $\A$, with $i_1=\nu(P)$. Consider a small ball in $\mathbb{P}^2$ with center $P$ and a local base point $b$ in its boundary
 sphere. Let $\lambda$ be a path from $X_0$ to $b$, and let $y_j$ be the (local) meridian of $L_j$ with base $b$, for
  $j=1,\dots,m$.  The path $\lambda$ can be chosen in such a way that Zariski-Van Kampen's relations associated to $P$ are :
\begin{equation*}
	[y_{i,i_m}^\lambda,\cdots,y_{i,i_1}^\lambda].
\end{equation*}
We can assume that $b$ is a point of $\epsilon_{i_1,j}$, for all $j= i_2,\cdots,i_m$. Then $\epsilon_{i_1,j}=\beta_j\inv \beta_{i_1}$ where $\beta_{i_1}$ goes from $X_0$ to $b$, and  $\beta_{j}\inv$ from $b$ to $X_0$. We get
\begin{equation*}
	\begin{array}{rcl}
		[\alpha_{i_m}^{\epsilon_{i_1,i_m}},\cdots,\alpha_{i_2}^{\epsilon_{i_1,i_2}},\alpha_{i_1}]
		& \Leftrightarrow &	[\alpha_{i_m}^{\beta_{i_m}\inv\beta_{i_1}},\cdots,\alpha_{i_2}^{\beta_{i_2}\inv\beta_{i_1}},\alpha_{i_1}^{\beta_{i_1}\inv\beta_{i_1}}], \\[0.3cm]	
		& \Leftrightarrow &	[\alpha_{i_m}^{\beta_{i_m}\inv},\cdots,\alpha_{i_2}^{\beta_{i_2}\inv},\alpha_{i_1}^{\beta_{i_1}\inv}]^{\beta_{i_1}}, \\[0.3cm]	
		& \Leftrightarrow &	[\alpha_{i_m}^{\beta_{i_m}\inv},\cdots,\alpha_{i_2}^{\beta_{i_2}\inv},\alpha_{i_1}^{\beta_{i_1}\inv}], \\[0.3cm]
		& \Leftrightarrow &	[\alpha_{i_m}^{\beta_{i_m}\inv},\cdots,\alpha_{i_2}^{\beta_{i_2}\inv},\alpha_{i_1}^{\beta_{i_1}\inv}]^\lambda. \\	
	\end{array}
\end{equation*}

Note that during this computation, the base point may have changed, but the first and the last relations are based in $X_0$.
Since $\alpha_{j}^{\beta_j\inv}=y_{i,j}$, for all $j=i_1,\cdots,i_m$, then:

\begin{equation*}
	\begin{array}{rcl}
		[\alpha_{i_m}^{\epsilon_{i_1,i_m}},\cdots,\alpha_{i_2}^{\epsilon_{i_1,i_2}},\alpha_{i_1}] & \Leftrightarrow & [y_{i_m},\cdots,y_{i_2},y_{i_1}]^\lambda, \\[0.3cm]
		& \Leftrightarrow & [y_{i_m}^\lambda,\cdots,y_{i_2}^\lambda,y_{i_1}^\lambda]. \\
	\end{array}
\end{equation*}
\end{proof*} 


\subsection{Homotopy type of the complement} \label{homotopy}

From Theorem~\ref{sequ}, we obtain a presentation of the fundamental group of $\p(E_\A)$.

\begin{cor}\label{MainCor}
	For $i=1, \dots, n$, let $\alpha_i$ be the meridians of the lines $L_i$. For any singular point $P=L_{i_1}\cap L_{i_2} \cap \cdots \cap L_{i_m}$ with $i_1=\nu(P)$, let 
	\begin{equation*}
		\mathcal{R}_P = [\alpha_{i_m}^{c_{i_m}},\cdots,\alpha_{i_2}^{c_{i_2}},\alpha_{i_1}], \hbox{ where } c_{i_j} = \left(\delta^l_{i_1,i_j}\right)\inv \mu_{i_1,i_j} \left(\delta^r_{i_1,i_j}\right)\inv \hbox{ for all } j=2,\cdots,m.
	\end{equation*}
	The fundamental group of  $E_\A$ admits the following presentation:
	\begin{equation*}
	 \p(E_\A)=\langle \alpha_1,\cdots,\alpha_n\ \mid\ \bigcup\limits_{P\in\P} \mathcal{R}_P \rangle .
	\end{equation*}
\end{cor}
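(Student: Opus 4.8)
The plan is to deduce Corollary~\ref{MainCor} directly from the exact sequence of Theorem~\ref{sequence} by passing to the quotient. By that theorem, $\p(E_\A)$ is isomorphic to $\p(B_\A)/K$, where $K$ is the normal closure of the elements $\delta^l_{s,t}\epsilon_{s,t}\delta^r_{s,t}\mu_{s,t}\inv$ together with the word $\alpha_0\cdots\alpha_n$. So I would start from the presentation of $\p(B_\A)$ given in Proposition~\ref{BoundaryPresentation}, namely on generators $\alpha_0,\dots,\alpha_n,\epsilon_{s_1,t_1},\dots,\epsilon_{s_l,t_l}$ with relators $\bigcup_{P\in\P}\mathcal{R}_P$ (where there $c_{i_j}=\mathfrak{e}_{i_1,i_j}=\epsilon_{i_1,i_j}$), and then adjoin the relators generating $K$.

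The key computation is then a Tietze transformation. Each relator of $K$ of the form $\delta^l_{s,t}\epsilon_{s,t}\delta^r_{s,t}\mu_{s,t}\inv=1$ can be rewritten as $\epsilon_{s,t}=\left(\delta^l_{s,t}\right)\inv\mu_{s,t}\left(\delta^r_{s,t}\right)\inv$; since $\delta^l_{s,t}$, $\delta^r_{s,t}$ and $\mu_{s,t}$ are words in the $\alpha_i$'s and in the $\epsilon_{s',t'}$'s with $(s',t')$ strictly earlier in the chosen order on $\mathscr{E}$ (this is exactly the recursive remark following Theorem~\ref{MainResult}), one eliminates the generators $\epsilon_{s,t}$ one at a time, in order, each substitution being legitimate because no circularity occurs. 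After all the $\epsilon$'s have been eliminated, the only surviving generators are $\alpha_1,\dots,\alpha_n$ (the generator $\alpha_0$ is removed using the remaining relator $\alpha_0\cdots\alpha_n=1$, which expresses $\alpha_0$ as a word in $\alpha_1,\dots,\alpha_n$), and each relator $\mathcal{R}_P$ becomes exactly $[\alpha_{i_m}^{c_{i_m}},\dots,\alpha_{i_2}^{c_{i_2}},\alpha_{i_1}]$ with $c_{i_j}$ now the word $\left(\delta^l_{i_1,i_j}\right)\inv\mu_{i_1,i_j}\left(\delta^r_{i_1,i_j}\right)\inv$, since substituting the eliminated generators into the old $c_{i_j}=\epsilon_{i_1,i_j}$ produces precisely this expression.

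I would also remark that one should check the substitution order is consistent: $\delta^l_{s,t}$ and $\delta^r_{s,t}$ only involve crossings along the wires $L_s$ and $L_t$ strictly before the point $L_s\cap L_t$, hence only cycles $\epsilon_{i_1,i_h}$ attached to singular points with smaller real part, so an induction on the position of $L_s\cap L_t$ along $\gamma$ justifies that the elimination is well-founded; this is the same ordering that underlies the parenthetical comment after the statement of Theorem~\ref{MainResult}. The main (and really only) obstacle is bookkeeping: one must be careful that the order in which the generators $\epsilon_{s,t}$ are eliminated matches the order in which their defining words are already free of not-yet-eliminated generators, and that after substitution the conjugating words in $\mathcal{R}_P$ land on the nose on $c_{i_j}=\left(\delta^l_{i_1,i_j}\right)\inv\mu_{i_1,i_j}\left(\delta^r_{i_1,i_j}\right)\inv$ rather than on some word equal to it only up to the relations; since the elimination is by literal substitution this is automatic, but it deserves an explicit sentence. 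The rest is a formal consequence of Tietze's theorem and Theorem~\ref{sequence}.
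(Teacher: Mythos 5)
Your argument is essentially the paper's own proof: it combines the presentation from Proposition~\ref{BoundaryPresentation} with the relators of $K$ from Theorem~\ref{sequence}, then eliminates the generators $\epsilon_{s,t}$ recursively via Tietze transformations using the relations $r_{s,t}$, and finally removes $\alpha_0$ with the relator $\alpha_0\cdots\alpha_n$. The only cosmetic difference is the well-ordering used to justify the elimination (you induct on the position of $L_s\cap L_t$ along $\gamma$, the paper uses an order on the pairs $(s,t)$), which is the same recursive observation in slightly different clothing.
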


\begin{rmk}
	This corollary can not be simplify using geometric cycles, since relations in the presentation of $\p(B_\A)$ --given in Proposition~\ref{BoundaryPresentation}-- are wrong with these cycles.
\end{rmk}

\begin{proof*}
	For each $\epsilon_{s,t}$, let $r_{s,t}$ be the relation $\epsilon_{s,t} =\left(\delta^l_{s,t}\right)\inv \mu_{s,t} \left( \delta^r_{s,t} \right)\inv$, and for each point $P\in\P$ (with $P=L_{i_1}\cap\cdots\cap L_{i_m}$ and $i_1=\nu(P)$), we define the relation
	$ \mathcal{R}'_P:\  [\alpha_{i_m}^{\epsilon_{i_1,i_m}},\cdots,\alpha_{i_2}^{\epsilon_{i_1,i_2}},\alpha_{i_1}]$.
	Then, Theorem \ref{sequence} implies that we have the following presentation:
	\begin{equation*}
		\p(E_\A)\ =\ \langle \alpha_0,\alpha_1,\cdots,\alpha_n,\epsilon_{s_1,t_1},\cdots,\epsilon_{s_l,t_l} \ |\  \bigcup\limits_{P\in\P} \mathcal{R}'_P,\ \bigcup\limits_{i=1}\limits^l r_{s_i,t_i},\ \alpha_0\cdots\alpha_n \rangle .
	\end{equation*}
	Consider the total order on the set $\left\{\epsilon_{s,t}\right\}$: $\left( \epsilon_{s,t}<\epsilon_{s',t'} \right) \Leftrightarrow \left(s \leq s'\ and\ t<t' \right)$. By construction, $\delta_{s,t}^l$ and $\delta_{s,t}^r$ depend on $\epsilon_{s',t'}$ if and only if $\epsilon_{s',t'}<\epsilon_{s,t}$. Since $\mu_{s,t}$ is a product of meridians, then the smallest $\epsilon_{s,t}$ is a product of meridians. And by induction, the relation $r_{s,t}$ expresses any $\epsilon_{s,t}$ as a product of $\alpha_i$. 
	
	Finally, using the relation $\alpha_0,\cdots,\alpha_n=1$, the meridian $\alpha_0$ can be removed from the set of generators of $\p(E_\A)$. Indeed no other relation contains $\alpha_0$.
\end{proof*}

\begin{ex}
	The presentation of the fundamental group of the didactic example is:
	\begin{equation*}
		<\alpha_1,\alpha_2,\alpha_3,\alpha_4\ \mid\ 
		[\alpha_4^{\alpha_3\inv},\alpha_2,\alpha_1],\ 
		[\alpha_3,\alpha_1],\ 
		[\alpha_3^{\alpha_1\alpha_2\inv\alpha_1\inv},\alpha_4] ,\ 
		[\alpha_3^{\alpha_1\alpha_4\inv},\alpha_2]>
	\end{equation*}
\end{ex}

\begin{propo} \label{type}
	The $2$-complex modeled on the minimal presentation given in Corollary \ref{MainCor}
is homotopy equivalent to $E_\A$.
\end{propo}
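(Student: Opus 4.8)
The plan is to show that the presentation 2-complex of Corollary \ref{MainCor} is obtained from a 2-complex already known to be homotopy equivalent to $E_\A$ by a finite sequence of elementary expansions and collapses (3-deformations), together with Tietze transformations that are realized geometrically. The natural starting point is the presentation 2-complex $Y$ associated to Arvola's presentation of Theorem \ref{ThmArvola}; it is classical (via the braided wiring diagram and the fact that $E_\A$ has the homotopy type of a 2-complex, $\A$ being a hyperplane arrangement so that $E_\A$ is a Stein manifold of complex dimension 2, hence homotopy equivalent to a CW-complex of dimension $\le 2$) that $Y \simeq E_\A$. So it suffices to exhibit a 3-deformation from $Y$ to the 2-complex $X$ modeled on the presentation of Corollary \ref{MainCor}.

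First I would introduce the intermediate 2-complex $Z$ modeled on the enlarged presentation appearing in the proof of Corollary \ref{MainCor}, namely with generators $\alpha_0,\dots,\alpha_n,\epsilon_{s_1,t_1},\dots,\epsilon_{s_l,t_l}$ and relators $\bigcup_P \mathcal{R}'_P$, the $r_{s_i,t_i}$, and $\alpha_0\cdots\alpha_n$. The passage from $Z$ to $X$ is purely combinatorial: each relator $r_{s,t}$ has the form $\epsilon_{s,t}\,w_{s,t}(\alpha_\bullet,\epsilon_{<})\inv$ where $w_{s,t}$ involves only meridians and strictly smaller $\epsilon$'s (by the order argument already given in the proof of Corollary \ref{MainCor}); hence each pair (generator $\epsilon_{s,t}$, relator $r_{s,t}$) can be removed by an elementary collapse, and the relator $\alpha_0\cdots\alpha_n$ together with the generator $\alpha_0$ likewise. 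Each such removal is an elementary (internal) collapse of the 2-complex, so $Z$ 3-deforms to $X$.

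Second, and this is where the geometry enters, I must show $Z$ 3-deforms to $Y$ (equivalently, to a complex manifestly homotopy equivalent to $E_\A$). Here I would use the exact sequence of Theorem \ref{sequence}, or rather its proof: the proof establishes that the relations $\mathcal{R}'_P$ together with the $r_{s,t}$ are \emph{equivalent as a set of consequences} to the Zariski--van Kampen relations coming from the braid monodromy, which are exactly (up to the same kind of bookkeeping) Arvola's relations $R_P$. More precisely, I would build $Z$ by starting from $Y$, performing elementary expansions that introduce the new generators $\epsilon_{s,t}$ and $\alpha_0$ each paired with its defining relator ($r_{s,t}$ for $\epsilon_{s,t}$, and $\alpha_0\cdots\alpha_n$ for $\alpha_0$) — each such move is an elementary expansion — and then, using these defining relations as substitutions, replacing Arvola's relators $R_P$ one at a time by the relators $\mathcal{R}'_P$. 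By the computation carried out in the proof of Theorem \ref{sequence}, for each $P$ the relator $\mathcal{R}'_P$ and the corresponding Arvola relator $R_P$ differ by a sequence of conjugations and applications of the already-present defining relations $r_{s,t}$; each replacement of one relator by a word differing from it by such consequences is realized by a sequence of elementary expansions and collapses on the 2-complex (this is the standard fact that Tietze transformations of type (R$\pm$), when the new relator is a consequence using the existing relators, are 3-deformations). Assembling these moves gives a 3-deformation $Y \leadsto Z \leadsto X$, and therefore $X \simeq Y \simeq E_\A$.

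The main obstacle I anticipate is the second step: one must be careful that the chain of equivalences in the proof of Theorem \ref{sequence} (the four-line display passing from $[\alpha_{i_m}^{\epsilon_{i_1,i_m}},\dots,\alpha_{i_1}]$ to $[y_{i_m}^\lambda,\dots,y_{i_1}^\lambda]$) is not merely an equality of normal closures but is realized by \emph{elementary} moves that only ever use relators already present in the intermediate complex — i.e. one never needs a relator that has already been collapsed away. Tracking the order on the $\epsilon_{s,t}$ and verifying that at each stage the needed substitution $r_{s',t'}$ with $\epsilon_{s',t'} < \epsilon_{s,t}$ is still available is the delicate point; the total order used in the proof of Corollary \ref{MainCor} is exactly what makes this bookkeeping close up, so I would organize the 3-deformation to respect that order throughout. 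A secondary point worth spelling out is the initial claim $Y \simeq E_\A$: since we only need homotopy equivalence (not simple homotopy type) for the statement as phrased, it is enough to invoke that $E_\A$ is homotopy equivalent to a 2-dimensional CW-complex and that a connected 2-complex is determined up to 3-deformation — within a fixed fundamental group and Euler characteristic, when the presentation is efficient — by work of Dyer--Sieradski / the general theory of $(G,2)$-complexes; for our minimal (hence efficient) presentations with the correct deficiency this yields the equivalence.
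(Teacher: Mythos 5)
Your plan (relate the two presentation complexes by elementary expansions and collapses) could in principle be carried out, but as written it has a genuine gap exactly where geometric input is indispensable. Everything rests on your base claim that the Arvola presentation complex $Y$ is homotopy equivalent to $E_\A$, and neither of your justifications establishes it. The Stein/affine property only gives that $E_\A$ has the homotopy type of \emph{some} $2$-complex, not of this particular presentation complex; and the fallback you invoke --- that the fundamental group together with the Euler characteristic determines a $2$-complex up to $3$-deformation once the presentation is efficient, ``by Dyer--Sieradski'' --- is not a theorem. Without extra hypotheses the pair $(\pi_1,\chi)$ does not determine the homotopy type of a $2$-complex (Dunwoody's trefoil examples), and at the minimal Euler characteristic the uniqueness question is a well-known open problem of the theory of $(G,2)$-complexes; nothing of the sort is available for arrangement groups. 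The ingredient that actually closes this hole is the theorem of Libgober~\cite{li:86}, which says that the $2$-complex modeled on the braid monodromy (Zariski--van Kampen) presentation is homotopy equivalent to the complement; this is precisely what the paper's own proof quotes, after observing that the computation in the proof of Theorem~\ref{sequence} makes the relations of Corollary~\ref{MainCor} equivalent to the Zariski--van Kampen relations. Without citing this result (or otherwise controlling $\pi_2$), your argument does not close.

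Second, the step you yourself flag as delicate is the real content of your route and is left unproven: you must verify that each passage from an Arvola/Zariski--van Kampen relator $R_P$ to $\mathcal{R}'_P$ is realized in the free group by conjugations and multiplications by conjugates of relators still present at that stage (Andrews--Curtis-type moves), compatibly with the total order used to collapse the $r_{s,t}$; replacing a relator by an arbitrary word with the same normal closure is not a $3$-deformation, and the four-line display in the proof of Theorem~\ref{sequence} is an identity in $\pi_1$ that still has to be promoted to such free-group moves. So either carry out this bookkeeping and add Libgober's theorem for the base case --- which would yield a stronger, simple-homotopy refinement of the proposition --- or argue as the paper does, deducing the statement directly from the equivalence of the relations of Corollary~\ref{MainCor} with the Zariski--van Kampen relations together with~\cite{li:86}.
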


\begin{proof*}
	The proof of Theorem \ref{sequence} shows in particular that the relations of the presentation in Corollary \ref{MainCor} are
equivalent to Zariski-Van Kampen's relations, based on the braid monodromy. It is shown in \cite{li:86}  that the $2$-complex modeled on a minimal presentation equivalent to Zariski-Van Kampen's presentation is homotopy equivalent to $E_\A$.
\end{proof*}


\section{The example of positive MacLane line arrangement}\label{Example}

In this section, we illustrate Theorem \ref{MainResult} with an arrangement $Q^+$ introduced by S.~MacLane, given by the following equations
\begin{equation*}
	\begin{array}{ccc}
		L_0=\left\{ z=0 \right\}; &
		L_1=\left\{ z-x=0 \right\}; & 
		L_2=\left\{ x=0 \right\}; \\ 
		L_3=\left\{ y=0 \right\}; &
		L_4=\left\{ z+\omega^2 x+\omega y =0 \right\}; &
		L_5=\left\{ y-x=0 \right\}; \\ 
		L_6=\left\{ z-x-\omega^2 y =0 \right\}; & 
		L_7=\left\{ z+\omega y =0 \right\}, 
	\end{array}
	\end{equation*}
 	where $\omega=\exp(\frac{ 2 i\pi}{3})$ is a primitive root of unity of order $3$.
	
The incidence graph $\Gamma$ of $Q^+$ is given in Figure \ref{MacLaneIncidence}. 
\begin{figure}[!ht]	
	\begin{center}
	\begin{tikzpicture}[xscale=1.2,yscale=0.6]
	
		\node[draw] (L0) at (-1,5) {$L_0$};
		
		\node[draw] (P012) at (1,7.5) {$P_{0,1,2}$};
		\node[draw] (P034) at (1,5) {$P_{0,3,4}$};
		\node[draw] (P056) at (1,2.5) {$P_{0,5,6}$};
		\node[draw] (P07) at (1,0) {$P_{0,7}$};

		\node[draw] (L7) at (2.5,0) {$L_7$};
		\node[draw] (L6) at (2.5,1.33) {$L_6$};
		\node[draw] (L5) at (2.5,2.66) {$L_5$};
		\node[draw] (L4) at (2.5,4) {$L_4$};
		\node[draw] (L3) at (2.5,5.33) {$L_3$};
		\node[draw] (L2) at (2.5,6.66) {$L_2$};
		\node[draw] (L1) at (2.5,8) {$L_1$};
		
		\node[draw] (P157) at (6,8.4) {$P_{1,5,7}$};
		\node[draw] (P13) at (6,7.2) {$P_{1,3}$};
		\node[draw] (P146) at (6,6) {$P_{1,4,6}$};		
		\node[draw] (P235) at (6,4.8) {$P_{2,3,5}$};		
		\node[draw] (P247) at (6,3.6) {$P_{2,4,7}$};
		\node[draw] (P26) at (6,2.4) {$P_{2,6}$};
		\node[draw] (P367) at (6,1.2) {$P_{3,6,7}$};		
		\node[draw] (P45) at (6,0) {$P_{4,5}$};
		
		\draw[thick, line width=1.5pt, cap=round] (L0) -- (P012);
		\draw[thick, line width=1.5pt, cap=round] (L0) -- (P034);
		\draw[thick, line width=1.5pt, cap=round] (L0) -- (P056);
		\draw[thick, line width=1.5pt, cap=round] (L0) -- (P07);
		
		\draw[thick, line width=1.5pt, cap=round] (P012) -- (L1);
		\draw[thick, line width=1.5pt, cap=round] (P012) -- (L2);
		\draw[thick, line width=1.5pt, cap=round] (P034) -- (L3);
		\draw[thick, line width=1.5pt, cap=round] (P034) -- (L4);
		\draw[thick, line width=1.5pt, cap=round] (P056) -- (L5);
		\draw[thick, line width=1.5pt, cap=round] (P056) -- (L6);
		\draw[thick, line width=1.5pt, cap=round] (P07) -- (L7);
		
		\draw[thick, line width=1.5pt, cap=round] (L1) -- (P157);
		\draw[thick, line width=1.5pt, cap=round] (L1) -- (P13);
		\draw[thick, line width=1.5pt, cap=round] (L1) -- (P146);
		
		\draw[thick, line width=1.5pt, cap=round] (L2) -- (P235);
		\draw[thick, line width=1.5pt, cap=round] (L2) -- (P247);
		\draw[thick, line width=1.5pt, cap=round] (L2) -- (P26);
		
		\draw[dashed] (L3) -- (P235);
		\draw[thick, line width=1.5pt, cap=round] (L3) -- (P367);
		\draw[dashed] (L3) -- (P13);
		
		\draw[dashed] (L4) -- (P247);
		\draw[thick, line width=1.5pt, cap=round] (L4) -- (P45);
		\draw[dashed] (L4) -- (P146);
		
		\draw[dashed] (L5) -- (P235);
		\draw[dashed] (L5) -- (P45);
		\draw[dashed] (L5) -- (P157);
		
		\draw[dashed] (L6) -- (P26);
		\draw[dashed] (L6) -- (P367);
		\draw[dashed] (L6) -- (P146);
		
		\draw[dashed] (L7) -- (P247);
		\draw[dashed] (L7) -- (P367);
		\draw[dashed] (L7) -- (P157);
		
	\end{tikzpicture}	
		
\end{center}
	\caption{Incidence graph of MacLane's arrangement $Q^+$}
	\label{MacLaneIncidence}
\end{figure}
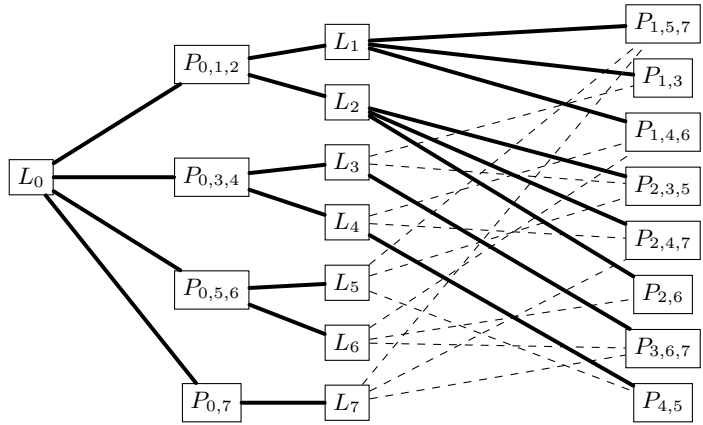

It is worth mentioning that $Q^+$ is one of the only two topological
 realisations of this combinatorial data by an arrangement in $\PP^2$. The other realisation $Q^-$ corresponds to $\omega=\exp(\frac{- 2 i\pi}{3})$.
 These two arrangements do not admit real equations.


\subsection*{Generating set of cycles of $\Gamma_{Q^+}$}
\vspace{0.25cm}

 Consider the maximal tree  $\T$ in $\Gamma_{Q^+}$ indicated with thick lines in Figure \ref{MacLaneIncidence}. Let $\mathscr{E}$ be the generating system of cycles induced by  $\T$ (it is in one-to-one correspondance with the dotted lines in Figure \ref{MacLaneIncidence}):
	\begin{equation*}
		\mathscr{E}=\left\{ \xi_{2,3}, \xi_{2,5}, \xi_{2,4}, \xi_{2,7}, \xi_{2,6}, \xi_{4,5}, \xi_{3,6}, \xi_{3,7}, \xi_{1,5}, \xi_{1,7}, \xi_{1,3}, \xi_{1,4}, \xi_{1,6} \right\}.
	\end{equation*}
	

\subsection*{Group of the boundary manifold}
\vspace{0.25cm}

By Section \ref{bound},  the images $\epsilon_{s,t}$ of the cycles $\xi_{s,t}$ in $B_{Q^+}$ form a family of cycles in $\p(B_{Q^+})$.
Proposition~\ref{BoundaryPresentation} applies to this explicit family, and $\pi_1(B_{Q^+})$ admits a presentation with generators:
\begin{equation*} 
	\left\{ 
	\alpha_0, \alpha_1, \alpha_2, \alpha_3, \alpha_4, \alpha_5, \alpha_6, \alpha_7,
	\mathfrak{e}_{2,3}, \mathfrak{e}_{2,5}, \mathfrak{e}_{2,4}, \mathfrak{e}_{2,7}, \mathfrak{e}_{2,6}, \mathfrak{e}_{4,5}, \mathfrak{e}_{3,6}, \mathfrak{e}_{3,7}, \mathfrak{e}_{1,5}, \mathfrak{e}_{1,7}, \mathfrak{e}_{1,3}, \mathfrak{e}_{1,4}, \mathfrak{e}_{1,6} \right\},
\end{equation*}
	and relations:
	\begin{multline*}
	[\alpha_7^{\mathfrak{e}_{1,7}},\alpha_5^{\mathfrak{e}_{1,5}},\alpha_1],
	[\alpha_3^{\mathfrak{e}_{1,3}},\alpha_1],
	[\alpha_6^{\mathfrak{e}_{1,6}},\alpha_4^{\mathfrak{e}_{1,4}},\alpha_1],
	[\alpha_5^{\mathfrak{e}_{2,5}},\alpha_3^{\mathfrak{e}_{2,3}},\alpha_2],\\
	[\alpha_7^{\mathfrak{e}_{2,7}},\alpha_4^{\mathfrak{e}_{2,4}},\alpha_2],
	[\alpha_6^{\mathfrak{e}_{2,6}},\alpha_2],
	[\alpha_7^{\mathfrak{e}_{3,7}},\alpha_6^{\mathfrak{e}_{3,6}},\alpha_3],
	[\alpha_5^{\mathfrak{e}_{4,5}},\alpha_4].
	\end{multline*}


\subsection*{Geometric cycles and unknotting map}
\vspace{0.25cm}

\begin{figure}[!ht]
	\begin{tikzpicture}
	\begin{scope}[shift={(0,0)},x=15pt,y=13pt]

		\node at (0,8) [below,left] {1};
		\node at (0,7) [below,left] {2};
		\node at (0,6) [below,left] {3};
		\node at (0,5) [below,left] {4};
		\node at (0,4) [below,left] {5};
		\node at (0,3) [below,left] {6};		
		\node at (0,2) [below,left] {7};
	
		\draw[thick,cap=round,color=white,line width=6pt] (0,4) -- (1,4) -- (2,5) -- (3,5) -- (4,7) --	(6,7) -- (9,4) -- (10,4) -- (10.5,4.5);
		\draw[thick,cap=round,line width=2pt] (0,4) -- (1,4) -- (2,5) -- (3,5) -- (4,7) --	(6,7) -- (9,4) -- (10,4) -- (10.5,4.5);			
		\draw[thick,cap=round,color=white,line width=6pt] (0,5) -- (1,5) -- (2,4) -- (8,4) -- (9,5) -- (10,5);
		\draw[thick,cap=round,line width=2pt] (0,5) -- (1,5) -- (2,4) -- (8,4) -- (9,5) -- (10,5) -- (10.5,4.5);	
				\ncross{8}{0};\ncross{7}{0};\ncross{6}{0};\ncross{5}{0};\ncross{4}{0};\ncross{3}{0};\ncross{2}{0};
				\ncross{8}{1};\ncross{7}{1};\ncross{6}{1};\ocross{4}{1};\ncross{3}{1};\ncross{2}{1};		
				\ncross{8}{2};\ncross{7}{2};\ncross{6}{2};\ncross{5}{2};\ncross{4}{2};\ucross{2}{2};
				\ncross{8}{3};\mcross{5}{3}{3};\ncross{4}{3};\ncross{3}{3};\ncross{2}{3};
				\ncross{8}{4};\ncross{7}{4};\ncross{6}{4};\mcross{3}{3}{4};\ncross{2}{4};
				\ncross{8}{5};\ncross{7}{5};\ncross{6}{5};\ncross{5}{5};\ncross{4}{5};\mcross{2}{2}{5};
				\ncross{8}{6};\ocross{6}{6};\ncross{5}{6};\ncross{4}{6};\ncross{3}{6};\ncross{2}{6};
				\ncross{8}{7};\ncross{7}{7};\ucross{5}{7};\ncross{4}{7};\ncross{3}{7};\ncross{2}{7};
				\ncross{8}{8};\ncross{7}{8};\ncross{6}{8};\ucross{4}{8}\ncross{3}{8};\ncross{2}{8};
				\ncross{8}{9};\ucross{6}{9};\ncross{5}{9};\ncross{4}{9}\ncross{3}{9};\ncross{2}{9};
				\ncross{8}{10};\ncross{7}{10};\ncross{6}{10};\rcross{4}{10}\ncross{3}{10};\ncross{2}{10};										\ncross{8}{11};\ncross{7}{11};\ucross{5}{11};\ncross{4}{11};\ncross{3}{11};\ncross{2}{11};
				\ncross{8}{12};\ocross{6}{12};\ncross{5}{12};\ncross{4}{12};\ncross{3}{12};\ncross{2}{12};
				\ncross{8}{13};\ncross{7}{13};\ocross{5}{13};\ocross{3}{13}\ncross{2}{13};
				\ncross{8}{14};\ncross{7}{14};\ncross{6}{14};\ocross{4}{14};\ncross{3}{14};\ncross{2}{14};
				\ncross{8}{15};\ncross{7}{15};\mcross{4}{3}{15};\ncross{3}{15};\ncross{2}{15};
				\ncross{8}{16};\ncross{7}{16};\ncross{6}{16};\ocross{4}{16};\ncross{3}{16};\ncross{2}{16};
				\ncross{8}{17};\ncross{7}{17};\ncross{6}{17};\ncross{5}{17};\ucross{3}{17};\ncross{2}{17};
				\mcross{6}{3}{18};\ncross{5}{18};\ncross{4}{18};\ncross{3}{18};\ncross{2}{18};
				\ncross{8}{19};\ncross{7}{19};\rcross{5}{19};\ncross{4}{19};\ncross{3}{19};\ncross{2}{19};
				\ncross{8}{20};\ncross{7}{20};\ncross{6}{20};\mcross{3}{3}{20};\ncross{2}{20};
				\ncross{8}{21};\ncross{7}{21};\ucross{5}{21};\ncross{4}{21};\ncross{3}{21};\ncross{2}{21};		
				
		\draw[thick,cap=round,line width=2pt] (1,5) -- (2,4);
		\draw[thick,cap=round,line width=2pt] (6,7) -- (7,6);
		\draw[thick,cap=round,line width=2pt] (8,4) -- (9,5);				
				
		\draw[thick,cap=corner,line width=6pt,color=white]	(1,5) -- (2,4);
		\draw[thick,cap=corner,line width=2pt,dotted]	(1,5) -- (2,4);
		
		\draw[thick,cap=corner,line width=6pt,color=white] (7,5) -- (8,6);
		\draw[thick,cap=corner,dotted] (7,5) -- (8,6);
		
		\draw[thick,cap=corner,line width=6pt,color=white]	(8,4) -- (9,5);
		\draw[thick,cap=corner,line width=2pt,dotted]	(8,4) -- (9,5);
		
	
		\node at (1.2,5) [above] {$\varsigma_1$};
		\node at (6.8,5) [above] {$\varsigma_2$};
		\node at (7.8,4) [above] {$\varsigma_3$};
				
	\end{scope}

\end{tikzpicture}
	\caption{Wiring diagram of positive MacLane's arrangement}
	\label{MacLaneWiring}
\end{figure}
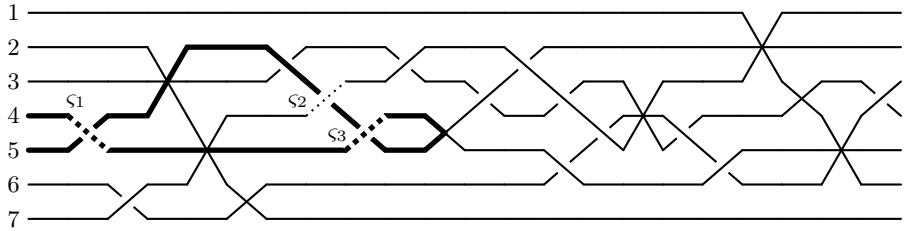

Let $W_{Q^+}$ be the braided wiring diagram of $Q^+$ given in Figure~\ref{MacLaneWiring}. Note that $W_{Q^+}$ differs from the wiring diagram considered 
 in \cite{cosu:08}  by an axial symmetry and a local move on the wires corresponding to $L_3,L_5,L_7$. 

The diagram  $W_{Q^+}$  is used to compute the unknotting map $\delta$,  and the images of the cycles $\epsilon$ in terms of geometric cycles, see Proposition \ref{image}. The thick lines in Figure \ref{MacLaneWiring} represent the cycle $\xi_{4,5}$, divided into two arcs of $L_4$ and $L_5$.

\begin{itemize}
	\item[-] The first arc  $L_4$  meets the triple point  $v_{P_{2,4,7}}$. This gives $\delta^l_{4,5}=\epsilon_{2,4}\inv	\alpha_2\inv\epsilon_{2,4}$.  
	\item[-] The second arc $L_5$ meets  $v_{P_{2,3,5}}$, and $\delta^r_{4,5}=\epsilon_{2,5}\inv	 (\epsilon_{2,3}\alpha_3 \epsilon_{2,3}\inv ) \alpha_2 \epsilon_{2,5}$.
\end{itemize}
This implies that
	\begin{equation*}\label{sigma45}
		\delta(\epsilon_{4,5}) = 
		\left(\epsilon_{2,4}\inv \alpha_2\inv\epsilon_{2,4}\right)\ .\ \epsilon_{4,5}\ .\
		\left[\epsilon_{2,5}\inv \left( \epsilon_{2,3}\alpha_3 \epsilon_{2,3}\inv \right) \alpha_2 \epsilon_{2,5}\right].
	\end{equation*}

Similarly, one computes:
\begin{align*}
	\delta(\epsilon_{2,3}) &= \epsilon_{2,3}, \\
	\delta(\epsilon_{2,5}) &=\epsilon_{2,5}, \\
	\delta(\epsilon_{2,4}) &=\epsilon_{2,4}, \\
	\delta(\epsilon_{2,7}) &=\epsilon_{2,7}, \\
	\delta(\epsilon_{2,6}) &=\epsilon_{2,6}, \\
	\delta(\epsilon_{4,5}) &=\left(\epsilon_{2,4}\inv \alpha_2\inv \epsilon_{2,4}\right)\ .\ \epsilon_{4,5}\ .\  \left[\epsilon_{2,5}\inv \left( \epsilon_{2,3} \alpha_3 \epsilon_{2,3}\inv \right) \alpha_2 \epsilon_{2,5}\right] \\
	\delta(\epsilon_{3,6}) &=\left(\epsilon_{2,3}\inv \alpha_2\inv \epsilon_{2,3}\right)\ .\ \epsilon_{3.6}\ .\ \left(\epsilon_{2,6}\inv \alpha_2 \epsilon_{2,6}\right) \\
	\delta(\epsilon_{3,7}) &=\left(\epsilon_{2,3}\inv \alpha_2\inv \epsilon_{2,3}\right)\ .\ \epsilon_{3,7}\ .\ \left[\epsilon_{2,7}\inv \left( \epsilon_{2,4} \alpha_4 \epsilon_{2,4}\inv \right) \alpha_2 \epsilon_{2,7} \right] \\
	\delta(\epsilon_{1,5}) &=\epsilon_{1,5}\ .\ \left[\left(\epsilon_{4,5}\inv \alpha_4 \epsilon_{4,5}\right) \left(\epsilon_{2,5}\inv \left( \epsilon_{2,3} \alpha_3 \epsilon_{2,3} \right) \inv \alpha_2 \epsilon_{2,5} \right) \right] \\
	\delta(\epsilon_{1,7}) &=\epsilon_{1,7}\ .\ \left[\left(\epsilon_{3,7}\inv \left(\epsilon_{3,6} \alpha_6 \epsilon_{3,6}\inv \right) \alpha_3 \epsilon_{3,7} \right) \left(\epsilon_{2,7}\inv \left(\epsilon_{2,4} \alpha_4 \epsilon_{2,4}\inv \right)\alpha_2 \epsilon_{2,7} \right) \right] \\
	\delta(\epsilon_{1,3}) &=\epsilon_{1,3}\ .\ \left(\epsilon_{2,3}\inv \alpha_2\epsilon_{2,3}\right) \\
	\delta(\epsilon_{1,4}) &=\epsilon_{1,4}\ .\ \left(\epsilon_{2,4}\inv \alpha_2\epsilon_{2,4}\right) \\
	\delta(\epsilon_{1,6}) &=\epsilon_{1,6}\ .\ \left[ \left(\epsilon_{3,6}\inv \alpha_3\epsilon_{3,6}\right) \left(\epsilon_{2,6}\inv \alpha_2 \epsilon_{2,6} \right) \right] 
\end{align*}


\subsection*{Retractions of geometric cycles}
\vspace{0.25cm}

We now compute the family of $\mu_{s,t}$, required to obtain the inclusion map, see Section~\ref{inclusion}.
The arcs of the wiring diagram $W_{Q^+}$ are labelled by the algorithm of W.~Arvola, see Section~\ref{color}.

The case of $\mu_{4,5}$ is drawn in thick in Figure \ref{MacLaneWiring}. The over arcs $\varsigma_1$, $\varsigma_2$ and $\varsigma_3$ are dotted in Figure \ref{MacLaneWiring}.  Arvola's labellings of these arcs are respectively : $a_{\varsigma_1}=\alpha_4$, $a_{\varsigma_2}=\alpha_7$ and $a_{\varsigma_3}=\alpha_7\inv \alpha_4 \alpha_7$. Furthermore, $\sgn(\varsigma_1)=-1$, $\sgn(\varsigma_2)=1$ and $\sgn(\varsigma_3)=1$. We obtain $\mu_{4,5} = \left( \alpha_7\inv \alpha_4 \alpha_7 \right)  \alpha_7  \alpha_4\inv $, which gives
\begin{equation*}
	\mu_{4,5} = \left( \alpha_7\inv \alpha_4 \alpha_7 \right)\ .\ \alpha_7\ .\ \alpha_4\inv.
\end{equation*}

Similarly:
\vspace{-5pt}
\begin{align*}
		\mu_{2,3}  &= 1, \\[-3pt]
		\mu_{2,5}  &= -\alpha_4, \\[-3pt]
		\mu_{2,4}  &= 1, \\[-3pt]
		\mu_{2,7}  &= 1, \\[-3pt]
		\mu_{2,6}  &= \alpha_7, \\[-3pt]
		\mu_{4,5}  &= \left( \alpha_7\inv \alpha_4 \alpha_7 \right)\ .\ \alpha_7\ .\ \alpha_4\inv, \\[-3pt]
		\mu_{3,6}  &= \left[\left(\alpha_4\inv\alpha_5\alpha_4\right)\left(\alpha_7\inv\right)\left(\alpha_7\inv\alpha_4\alpha_7^2\alpha_4\inv\ \alpha_5\inv\ \alpha_4\alpha_7^{-2}\alpha_4\inv\alpha_7\right)\left(\alpha_7\right)\right]\ .\\[-3pt]
		& \hspace{8cm} \left[\left(\alpha_7\inv\right)\left(\alpha_7\inv\alpha_4\inv\alpha_7\right)\left(\alpha_7\right)\right], \\[-3pt]
		\mu_{3,7}  &= \left[\left(\alpha_4\inv\alpha_5\alpha_4\right)\left(\alpha_7\inv\right)\left(\alpha_7\inv\alpha_4\alpha_7^2\alpha_4\inv\ \alpha_5\inv\ \alpha_4\alpha_7^{-2}\alpha_4\inv\alpha_7\right)\left(\alpha_7\right)\right], \\[-3pt]
		\mu_{1,5}  &=  \left(\alpha_7\inv\right)\left(\alpha_7\inv\alpha_4\alpha_7\right)\left(\alpha_7\right)\left(\alpha_4\inv\right), \\[-3pt]
		\mu_{1,7}  &= 1, \\[-3pt]
		\mu_{1,3}  &= \left(\alpha_7\inv\alpha_4\inv\alpha_7^2\ \alpha_6\inv\ \alpha_7^{-2}\alpha_4\alpha_7\right) \left(\alpha_7\inv\right)  \\[-3pt]
		& \hspace{4cm}\left(\alpha_7\inv\alpha_4\alpha_7^2\alpha_4\inv\ \alpha_5\ \alpha_4\alpha_7^{-2}\alpha_4\inv\alpha_7\right) \left(\alpha_7\right) \left(\alpha_4\inv\alpha_5\inv\alpha_4\right), \\[-3pt]
		\mu_{1,4}  &= 1, \\[-3pt]
		\mu_{1,6}  &= \left(\alpha_7\inv\alpha_4\alpha_7\right)\left(\alpha_7\inv\right)\left(\alpha_7\inv\alpha_4\inv\alpha_7\right)\left(\alpha_7\right). 
\end{align*}


\subsection*{Images in the group of the complement}\label{ImagesExamples}
\vspace{0.25cm}

Following Theorem \ref{MainResult}, we can compute $i_*: \pi_1(B_{Q^+} ) \twoheadrightarrow \pi_1(E_{Q^+})$. 
 The computations above describe the relations induced by the images of the cycles $\epsilon$ in $\pi_1(E_{Q^+})$. By the previous computations, $\epsilon_{2,3},\epsilon_{2,4},\epsilon_{2,7}$ are equal to $1$ (i.e. they are contractible in $E_{Q^+}$), and they are relations $r_{2,3},\ r_{2,4}$ and $r_{2,7}$. Without additional computation, we obtain:
\begin{equation*}
	\begin{array}{l p{2cm}l}
		r_{2,5}\ :\ \epsilon_{2,5} = \alpha_4\inv, &&
		r_{2,6}\ :\ \epsilon_{2,6} = \alpha_7.
	\end{array}
\end{equation*}

The case of $r_{4,5}$: 
\begin{equation*}
	r_{4,5}\ :\ (\epsilon_{2,4}\inv \alpha_2\inv\epsilon_{2,4})\ .\ \epsilon_{4,5}\ .\
	[\epsilon_{2,5}\inv ( \epsilon_{2,3}\alpha_3 \epsilon_{2,3}\inv ) \alpha_2 \epsilon_{2,5}]  =  \left( \alpha_7\inv \alpha_4 \alpha_7 \right)\ .\ \alpha_7\ .\ \alpha_4\inv.
\end{equation*}
Then using $r_{2,4}$, $r_{2,5}$ and $r_{2,3}$, we obtain that:
\begin{equation*}
	r_{4,5}\ :\ \epsilon_{4,5} = \left(\alpha_2\right)\ .\ \left(\left( \alpha_7\inv \alpha_4 \alpha_7 \right)\ .\ \alpha_7\ .\ \alpha_4\inv\right)\ .\ \left( \alpha_4 \alpha_2\inv \alpha_3\inv \alpha_4\inv \right).	
\end{equation*}

The others relations can be computed by the same way, and from the proof of Corollary \ref{MainCor}, we obtain:

\begin{propr}\label{Fundamental_Group_MacLane}
The fundamental group of $E_{Q^+}$ admits the following presentation:
	\begin{multline*}
		\p(E_{Q^+})= \langle \alpha_1,\alpha_2,\alpha_3,\alpha_4,\alpha_5,\alpha_6,\alpha_7,\\[-2pt]
		\epsilon_{2,3}, \epsilon_{2,5}, \epsilon_{2,4}, \epsilon_{2,7}, \epsilon_{2,6}, \epsilon_{4,5}, \epsilon_{3,6}, \epsilon_{3,7}, \epsilon_{1,5}, \epsilon_{1,7},\epsilon_{1,3}, \epsilon_{1,4},\epsilon_{1,6}\ |\\[-2pt]
		r_{2,3}, r_{2,5}, r_{2,4}, r_{2,7}, r_{2,6}, r_{4,5}, r_{3,6}, r_{3,7}, r_{1,5}, r_{1,7},r_{1,3}, r_{1,4},r_{1,6},\\
	[\alpha_7^{\epsilon_{1,7}},\alpha_5^{\epsilon_{1,5}},\alpha_1],
	[\alpha_3^{\epsilon_{1,3}},\alpha_1],
	[\alpha_6^{\epsilon_{1,6}},\alpha_4^{\epsilon_{1,4}},\alpha_1],
	[\alpha_5^{\epsilon_{2,5}},\alpha_3^{\epsilon_{2,3}},\alpha_2],\\
	[\alpha_7^{\epsilon_{2,7}},\alpha_4^{\epsilon_{2,4}},\alpha_2],
	[\alpha_6^{\epsilon_{2,6}},\alpha_2],
	[\alpha_7^{\epsilon_{3,7}},\alpha_6^{\epsilon_{3,6}},\alpha_3],
	[\alpha_5^{\epsilon_{4,5}},\alpha_4],
	\alpha_0\cdots\alpha_n
		\rangle.
	\end{multline*}
\end{propr}

\begin{acknowledgements}
	The authors thank E. Artal Bartolo for helpful comments and suggestions.
\end{acknowledgements}

\end{document}